\def\namedlabel#1#2{\begingroup
    #2%
    \def\@currentlabel{#2}%
    \phantomsection\label{#1}\endgroup
}
\renewcommand{\pmod}[1]{\left( \mathrm{ mod\;}#1\right)}
\newcommand{\Area}{\operatorname{Area}}
\newcommand{\SF}{\mathfrak{SF}}
\renewcommand{\SF}{\mathfrak{S}}
\renewcommand{\SF}{\mathscr S}
\newcommand{\cF}{\mathcal{F}}
\newcommand{\cH}{\mathcal H}
\newcommand{\cT}{\mathcal T}
\newcommand{\cS}{\mathcal S}
\newcommand{\cQ}{\mathcal Q}
\newcommand{\cE}{\mathcal E}
\newcommand{\cG}{\mathcal G}
\newcommand{\ZZ}{\mathbb{Z}}
\newcommand{\RR}{\mathbb{R}}
\newcommand{\NN}{\mathbb{N}}
\newcommand{\QQ}{\mathbb{Q}}
\newcommand*\dd{\mathop{}\!\textnormal{\slshape d}}
\newcommand{\SL}{\operatorname{SL}(2,\NN)}
\newcommand{\va}{\bar{a}}
\newcommand{\vq}{\bar{q}}
\newcommand{\vva}{\bar{\bar{a}}}
\newcommand{\vvq}{\bar{\bar{q}}}
\newcommand{\vvva}{\bar{\bar{\bar{a}}}}
\newcommand{\vd}{\bar{d}}
\newtheorem{theorem}{Theorem}
\newtheorem{lemma}[theorem]{Lemma}
\newtheorem{corollary}[theorem]{Corollary}
\newtheorem{proposition}[theorem]{Proposition}
  \renewcommand{\thetheorem}{\thesection.\arabic{theorem}}%
\theoremstyle{remark}
\newtheorem{remark}{Remark}
\newcommand{\sdfrac}[2]{\mbox{\small$\displaystyle\frac{#1}{#2}$}}
\definecolor{orange}{rgb}{1,0.5,0}
\definecolor{Red}{rgb}{.795,0.015,0.017}
\definecolor{Ggreen}{rgb}{0.,0.675,0.0128}
\definecolor{Bblue}{rgb}{0.16,.32,0.91}
\renewcommand*{\backref}[1]{}
\renewcommand*{\backrefalt}[4]{%
  \ifcase #1 %
No citations.
  \or
(page #2).%
  \else
(pages #2).%
  \fi%
}
\begin{document}

\title[On the distribution of $\SL$-saturated Farey fractions]{On the distribution of $\SL$-saturated
Farey fractions}
\author{Jack Anderson, Florin P. Boca, Cristian Cobeli, Alexandru Zaharescu}

\date{\today}


\address[Jack Anderson]{Department of Mathematics, University of Illinois at Urbana-Champaign, Urbana, IL 61801, USA.}
\email{jacka4@illinois.edu}

\address[Florin P. Boca]{Department of Mathematics, University of Illinois at Urbana-Champaign, Urbana, IL 61801, USA.}
\email{fboca@illinois.edu}

\address[Cristian Cobeli]{"Simion Stoilow" Institute of Mathematics of the Romanian Academy,~21 Calea Grivitei Street, P. O. Box 1-764, Bucharest 014700, Romania}
\email{cristian.cobeli@imar.ro}

\address[Alexandru Zaharescu]{Department of Mathematics,University of Illinois at Urbana-Champaign, Urbana, IL 61801, USA,
%
and 
"Simion Stoilow" Institute of Mathematics of the Romanian Academy,~21 
Calea Grivitei 
Street, P. O. Box 1-764, Bucharest 014700, Romania}
\email{zaharesc@illinois.edu}

\subjclass[2020]{Primary 11B57.
Secondary: 11J71,   11K36,  11L05.}


\thanks{Key words and phrases: Farey sequence; $\operatorname{SL}(2,{\mathbb N})$-saturated
Farey fractions; asymptotic and gap distributions}

\begin{abstract}
We consider the set $\SF_Q$ of Farey fractions $d/b$ of order $Q$ with the property that there exists 
a matrix $\left( \begin{smallmatrix} a &  b \\ c & d \end{smallmatrix} \right) \in \operatorname{SL}(2,{\mathbb Z})$  of trace at most $Q$, 
with positive entries and
\mbox{$a\ge \max\{ b,c\}$}.
For every $Q\ge 3$, the set $\SF_Q \cup \{ 0\}$ is shown to define a unimodular partition of the interval~$[0,1]$.
We also prove that the elements of $\SF_Q$ are asymptotically distributed with respect to the probability measure with density
$ (1/(1+x) -1/(2+x) )/\log (4/3) $ and that the sequence
of sets~$(\SF_Q)_Q$ has a limiting gap distribution as $Q\rightarrow \infty$.
\end{abstract}
\maketitle

\section{Introduction}
The monoid
\begin{equation*}
\cS :=\left\{ M=\left( \begin{matrix} a & b \\ c & d \end{matrix}\right)  \in \operatorname{SL}(2,\ZZ), \
a\ge b \ge d\ge 1,\ a\ge c \ge d\right\}
\end{equation*}
appears naturally in Diophantine approximation.
An immediate application of the Euclidean algorithm shows that these matrices can be uniquely expressed as products
\begin{equation*}
M=
\left( \begin{matrix} a_1 & 1 \\ 1 & 0 \end{matrix}\right) \cdots \left( 
\begin{matrix} a_{2r} &  1 \\1 & 0 \end{matrix} \right)
=\left( \begin{matrix} q_{2r+1} & q_{2r} \\ p_{2r+1} & p_{2r} \end{matrix} \right) ,
\quad r, a_i \in \NN,
\end{equation*}
while $c/a=p_{2r+1}/q_{2r+1}$ and $d/b=p_{2r}/q_{2r}$ represent consecutive convergents of the numbers of the  form
$[0;a_1,a_2,\ldots,a_{2r},\ast] \in [0,1] \setminus \QQ$.

The sets
\begin{equation*}
 \cS_Q :=\{ M\in\cS : \operatorname{Tr} M \le Q\} 
\end{equation*}
play an important role, for instance, in the study of the distribution of reduced quadratic irrationals
(which coincide with the periodic points of the Gauss map for regular continued fractions)
\cite{KOPS2001,Bo2007,Ust2013}, or of periodic points of the Farey map (see the appendix to 
\cite{Hee2019}). 
It is known  (see \cite{Ust2013} for the final result) that
\begin{equation*}
\# \cS_Q =\frac{\log 2}{2\zeta(2)} \, Q^2 +O (Q^{3/2}\log^4 Q),
\end{equation*}
and more generally, for every $\beta \in [0,1]$,
\begin{equation}\label{eq1}
\#  \left\{ \left( \begin{matrix} a & b \\ c & d \end{matrix}\right) \in \cS_Q : \frac{d}{b} \le \beta \right\} 
=\frac{\log(1+\beta)}{2\zeta(2)} \, Q^2 +O (Q^{3/2}\log^4 Q) .
\end{equation}
In particular, one recovers a version of Pollicott's classical distribution result for reduced quadratic
irrationals \cite{Pol1986}
\begin{equation*}
\lim\limits_{Q\rightarrow \infty} \frac{\# \left\{ \left( \begin{smallmatrix} a & b \\ c & d \end{smallmatrix} \right)  \in \cS_Q:
d/b \le \beta \right\}}{\# \cS_Q} =\int_0^\beta \frac{dx}{(1+x)\log 2} .
\end{equation*}

Considering the natural map 
\begin{equation*}
\Psi :\cS \rightarrow \QQ \cap (0,1],\quad \Psi \bigg( \left( \begin{matrix} a& b \\ c & d \end{matrix} \right) \bigg) :=\frac{d}{b} ,
\end{equation*}
we study the asymptotic distribution of \emph{$\SL$-saturated Farey fractions of order $Q$},
that we define as the elements of the subset
\begin{equation*}
\SF_Q :=\Psi (\cS_Q) 
\end{equation*}
of the customary Farey set
\begin{equation*}
\cF_Q :=\left\{ \frac{d}{b} : d,b\in \NN, \  0\le d \le  b \le  Q,\   (d,b)=1 \right\} .
\end{equation*}
The sets $\SF_Q$ exhibit interesting arithmetic properties, resembling those of $\cF_Q$. Firstly, their union gives 
precisely the set ${\mathbb Q} \cap (0,1]$. Secondly, as shown in the Appendix,
the elements of each set $\SF_Q^* :=\{ 0\} \cup \SF_Q$ define a {\it unimodular partition} of the interval $[0,1]$.
This means that if $0/1 < a_1/q_1= 1/Q < \cdots < a_{N(Q)}/b_{N(Q)}$ denote the elements 
of $\{ 0\} \cup \SF_Q$, then $a_{i+1} q_i -a_i  q_{i+1} =1$ for every $i=1,\ldots, N(Q) -1$.
As an interesting geometric application, when viewed as elements on the
boundary $\RR \cup \{\infty\}$ of the upper half-plane $\mathbb{H}$, consecutive elements in $\SF_Q^*$ are
connected by arcs in the Farey tessellation.
Thirdly, it is observed in the beginning of the proof of Theorem~\ref{theorem1} that
\begin{equation*}
\SF_Q =\{ d/b \in \cF_Q : b+d+\bar{d} \le Q \},
\end{equation*}
where $\bar{d}$ denotes the multiplicative inverse of $d\pmod{b}$ in the interval $[1,b)$.

However, the elements of $\SF_Q$ are not uniformly distributed as $Q\rightarrow\infty$.
Their asymptotic distribution 
is investigated in Section~\ref{Section2}, where we prove

\begin{theorem}\label{theorem1}
For every $\beta \in [0,1]$,
\begin{equation*}
\# ( \SF_Q \cap [0,\beta]) =\frac{Q^2}{2\zeta (2)}
\log \bigg( \frac{2(1+\beta)}{2+\beta}\bigg) +O_\varepsilon (Q^{7/4+\varepsilon}) ,
\quad Q\rightarrow \infty.
\end{equation*}
\end{theorem}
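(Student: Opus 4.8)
The plan is to reduce membership in $\cC_Q$ to an explicit arithmetic condition, and then count with the help of the equidistribution of modular inverses. Fix $b\ge 2$ and $1\le d\le b-1$ with $(d,b)=1$, and write $\overline d\in\{1,\dots,b-1\}$ for the inverse of $d$ modulo $b$. Every solution in integers of $ad-bc=1$ has the form $a=\overline d+kb$, $c=(\overline d\, d-1)/b+kd$ with $k\in\ZZ$, and a short check shows that the monoid conditions $a\ge b$, $a\ge c\ge d\ge 1$ hold precisely for $k\ge 1$ (when $k\le 0$ one has $a\le\overline d\le b-1<b$). Among those $k$ the trace $a+d=\overline d+kb+d$ is minimal at $k=1$, so
\[
\frac{d}{b}\in\cC_Q\iff b+d+\overline d\le Q\qquad(b\ge 2),
\]
while the fraction $0$ never occurs and $1$ occurs only for $Q\ge 3$. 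Hence
\[
\#\bigl(\cC_Q\cap[0,\beta]\bigr)=\sum_{2\le b\le Q}N_b+O(1),\qquad
N_b:=\#\bigl\{1\le d\le\beta b:\ (d,b)=1,\ d+\overline d\le Q-b\bigr\}.
\]

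I would then split at $b=Q/(2+\beta)$. For $b\le Q/(2+\beta)$ the inequality $d+\overline d\le Q-b$ is automatic (one has $d+\overline d<(1+\beta)b\le Q-b$), so $N_b=\#\{1\le d\le\beta b:(d,b)=1\}=\beta\varphi(b)+O(\tau(b))$. For $b>Q/(2+\beta)$, $N_b$ counts the lattice points $(d,\overline d)$, equivalently the pairs $(d,e)$ with $de\equiv 1\pmod{b}$, inside a polygon $\Omega_b\subseteq[1,b-1]^2$ cut out by $O(1)$ lines (the axes, $d=\beta b$, and $d+e=Q-b$), and here I would use the classical estimate, a consequence of Weil's bound for Kloosterman sums, that for any axis-parallel box $I\times J$,
\[
\#\{d:\ d\in I,\ \overline d\in J\}=\frac{\varphi(b)}{b^{2}}|I|\,|J|+O_\varepsilon\bigl(b^{1/2+\varepsilon}\bigr).
\]
To pass from boxes to the diagonal cut $\{d+e\le Q-b\}$ I would slice the $d$-range into $\asymp L/w$ blocks of length $w$, where $L=Q-b\le b$: the staircase approximation costs $O(wL/b)$ in the area term and $O\bigl((L/w)b^{1/2+\varepsilon}\bigr)$ in the accumulated box errors, and the choice $w\asymp b^{3/4}$ balances these. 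This yields, for all $b$,
\[
N_b=\varphi(b)\,A\!\Bigl(\tfrac Qb\Bigr)+O_\varepsilon\bigl(b^{3/4+\varepsilon}\bigr),\qquad
A(u):=\area\bigl\{(x,y)\in[0,1]^2:\ x\le\beta,\ x+y\le u-1\bigr\},
\]
where for $b\le Q/(2+\beta)$ one has $A(Q/b)=\beta$ and the sharper error $O(\tau(b))$.

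Finally I would assemble the main term. By Abel summation against $\sum_{n\le t}\varphi(n)=t^{2}/(2\zeta(2))+O(t\log t)$ followed by the substitution $u=Q/t$,
\[
\sum_{2\le b\le Q}\varphi(b)\,A\!\Bigl(\tfrac Qb\Bigr)=\frac{Q^{2}}{\zeta(2)}\int_{1}^{\infty}\frac{A(u)}{u^{3}}\,du+O\bigl(Q\log^{2}Q\bigr),
\]
and an elementary integration of the continuous, piecewise-quadratic function $A$ over $[1,1+\beta]$, $[1+\beta,2]$, $[2,2+\beta]$ and $[2+\beta,\infty)$ gives $\int_{1}^{\infty}A(u)u^{-3}\,du=\tfrac12\log\bigl(2(1+\beta)/(2+\beta)\bigr)$, hence the leading term $\tfrac{Q^{2}}{2\zeta(2)}\log\bigl(2(1+\beta)/(2+\beta)\bigr)$. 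Collecting the errors: the range $b\le Q/(2+\beta)$ contributes $\sum_b O(\tau(b))=O(Q\log Q)$, while the range $b>Q/(2+\beta)$, of length $\asymp Q$, contributes $\sum_{b\asymp Q}O_\varepsilon(b^{3/4+\varepsilon})=O_\varepsilon(Q^{7/4+\varepsilon})$, which dominates.

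The principal difficulty is the lattice-point count over the triangular part of $\Omega_b$: Weil's bound gives only the saving $O(b^{1/2+\varepsilon})$ per box, so a naive unit-width slicing of the triangle would destroy it entirely; the force of the argument lies in choosing the block length $w\asymp b^{3/4}$ so that the staircase error and the accumulated Kloosterman error are simultaneously $O(b^{3/4+\varepsilon})$, and it is exactly this exponent, summed over $b\le Q$, that produces the $Q^{7/4+\varepsilon}$ term.
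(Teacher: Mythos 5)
Your proposal is correct and follows essentially the same approach as the paper: the same characterization $d/b\in\cC_Q\iff b+d+\overline{d}\le Q$ obtained by observing $k=1$ minimizes the trace in the parametrization of $\cS_Q$, the same reduction to counting pairs $(d,\overline{d})$ with $d\overline{d}\equiv 1\pmod{b}$ inside polygonal regions via the Weil--Estermann/Kloosterman bound, and the same error exponent $Q^{7/4+\varepsilon}$. The only difference is bookkeeping for the main term: the paper partitions $b\le Q$ into four explicit ranges where the region is a rectangle, pentagon, trapezoid, or triangle and evaluates the resulting sums of $\varphi(b)$, $\varphi(b)/b$, $\varphi(b)/b^2$ in closed form, whereas you encode the normalized area as a single piecewise function $A(Q/b)$ and evaluate $\sum_b\varphi(b)A(Q/b)$ by Abel summation followed by one integral, which is slightly tidier but mathematically identical.
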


\begin{corollary}\label{cor2}
$\displaystyle N(Q):=\# \SF_Q =\frac{Q^2}{2\zeta (2)} \log (4/3) +O_\varepsilon (Q^{7/4+\varepsilon}).$
\end{corollary}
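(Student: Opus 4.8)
Corollary~\ref{cor2} is the case $\beta=1$ of Theorem~\ref{theorem1}, since $\log\bigl(\tfrac{2(1+1)}{2+1}\bigr)=\log(4/3)$; so the plan is to prove Theorem~\ref{theorem1}, and I describe that. The first step is to translate membership in $\cC_Q$ into an arithmetic condition on $(b,d)$. Fix a reduced fraction $d/b$ with $1\le d<b\le Q$ and let $a_0=a_0(d,b)\in\{1,\dots,b-1\}$ denote the inverse of $d$ modulo $b$, so that $c_0:=(a_0d-1)/b$ is a nonnegative integer. The matrices $\matrixdxd{a}{b}{c}{d}\in\operatorname{SL}(2,\ZZ)$ with this second column and $a\ge b$ are precisely those with $a=a_0+tb$, $c=c_0+td$, $t\ge1$, and from $0\le c_0<a_0<b$ and $d<b$ one checks that the remaining conditions $a\ge c\ge d\ge1$ of $\cS$ then hold automatically. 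Hence the smallest trace of such a matrix is $a_0+b+d$, attained at $t=1$, and therefore
\[
\#\bigl(\cC_Q\cap[0,\beta]\bigr)=\#\bigl\{(b,d):1\le d<b,\ (d,b)=1,\ d\le\beta b,\ a_0(d,b)+d\le Q-b\bigr\}+O(1).
\]

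Next I would split according to the size of $b$. For $b\le Q/(2+\beta)$ one has $a_0+d<(1+\beta)b\le Q-b$, so every admissible $d$ is counted, and this range contributes $\beta\sum_{b\le Q/(2+\beta)}\phi(b)+O(Q^{1+\varepsilon})$. For larger $b$, set $t=b/Q$ and $R=R(t)=\tfrac1t-1$: the inequality $a_0(d,b)\le(Q-b)-d$ says exactly that $\bigl(d/b,\,a_0(d,b)/b\bigr)$ lies in the triangle $\Omega_t=\{(x,y)\in(0,1)^2:x\le\beta,\ x+y\le R\}$, and since $da_0\equiv1\pmod b$ the inner count is a count of points of $\{(d,e):de\equiv1\ (\mathrm{mod}\ b)\}$ inside a polygon of the square $[0,b)^2$. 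For a \emph{box} this is standard: the Weil bound for incomplete Kloosterman sums gives the count as $\phi(b)\cdot(\text{area in }[0,1]^2)+O_\varepsilon(b^{1/2+\varepsilon})$.

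To deal with the sloped edge $x+y\le R$ I would partition $\Omega_t$ into $K$ horizontal strips, replace each by a box, and balance the resulting staircase error $\asymp b/K$ against the $K$ copies of the Kloosterman error $\asymp Kb^{1/2+\varepsilon}$; the choice $K\asymp b^{1/4}$ yields an error $O_\varepsilon(b^{3/4+\varepsilon})$ for each modulus, hence $O_\varepsilon(Q^{7/4+\varepsilon})$ after summation over $b\le Q$ (the range $Q-b\ll Q^{1/2}$, in which the triangle has $O(Q)$ lattice points, is disposed of trivially). The main terms assemble, using $\sum_{b\le x}\phi(b)=x^2/(2\zeta(2))+O(x\log x)$ and partial summation, into $\frac{Q^2}{\zeta(2)}\int_0^1 t\,\Area(\Omega_t)\,dt$; evaluating this piecewise-polynomial integral gives $\tfrac12\log\bigl(\tfrac{2(1+\beta)}{2+\beta}\bigr)$, which is Theorem~\ref{theorem1}. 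For $\beta=1$ the three pieces $t\in(0,\tfrac13)$, $(\tfrac13,\tfrac12)$, $(\tfrac12,1)$ contribute $\tfrac1{18}$, $\tfrac{37}{144}+\tfrac12\log\tfrac23$ and $\tfrac12\log2-\tfrac5{16}$, whose sum is $\tfrac12\log\tfrac43$, giving Corollary~\ref{cor2}.

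The main obstacle is exactly this passage from a rectangular to a triangular counting region: the triangular domain does not factor as a product of intervals, so a box-by-box use of the Weil bound would require $K\asymp b^{1/2}$ strips and destroy the gain, and the exponent $7/4$ rests on optimizing the strip decomposition against the Kloosterman estimate. A secondary technical issue is uniformity in $\beta$ when $\beta$ is close to a rational of small denominator — the cut $x\le\beta$ then needs the same Kloosterman input — but this plays no role in the case $\beta=1$ relevant to Corollary~\ref{cor2}.
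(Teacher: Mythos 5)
Your proposal is correct and follows essentially the same route as the paper: the same reduction of $d/b\in\cC_Q$ to the arithmetic condition $b+d+\bar d\le Q$ (so the count becomes $\#\{(d,\bar d):d\bar d\equiv1\ (\mathrm{mod}\ b),\ d\le\beta b,\ \bar d\le b,\ d+\bar d\le Q-b\}$ summed over $b$), the same Weil--Estermann/Kloosterman estimate for points on $d\bar d\equiv1\ (\mathrm{mod}\ b)$ in the truncated triangle (which the paper invokes as Lemma 3.1 of \cite{Bo2007} with $T=Q^{1/4}$ and you re-derive by hand via the horizontal-strip decomposition with $K\asymp b^{1/4}$), and the same totient-sum asymptotics yielding the error $O_\varepsilon(Q^{7/4+\varepsilon})$. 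The only cosmetic difference is that you convert the main term into the integral $\frac{Q^2}{\zeta(2)}\int_0^1 t\,\Area(\Omega_t)\,dt$ by partial summation, whereas the paper splits into the four explicit $b$-ranges and evaluates $\sum\varphi(b)$, $\sum\varphi(b)/b$, $\sum\varphi(b)/b^2$ directly; both evaluations give $\tfrac12\log\bigl(\tfrac{2(1+\beta)}{2+\beta}\bigr)$, hence $\tfrac12\log\tfrac43$ at $\beta=1$.
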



\begin{corollary}\label{cor3}
For every $\beta \in [0,1]$,
\begin{equation*}
\lim\limits_{Q\rightarrow \infty} \frac{\# (\SF_Q \cap[0,\beta])}{\# \SF_Q} =\int_0^\beta 
\bigg( \frac{1}{1+x} -\frac{1}{2+x}\bigg) \frac{dx}{\log (4/3)}  .
\end{equation*}
\end{corollary}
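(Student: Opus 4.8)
The plan is to deduce the corollary directly from Theorem~\ref{theorem1} and Corollary~\ref{cor2} by forming the quotient of the two asymptotic expansions and letting the error terms wash out. First I would record the elementary antiderivative identity that rewrites the main term of Theorem~\ref{theorem1} in integral form: since $\int_0^\beta \frac{dx}{1+x}=\log(1+\beta)$ and $\int_0^\beta \frac{dx}{2+x}=\log\frac{2+\beta}{2}$, we get
\begin{equation*}
\int_0^\beta \left( \frac{1}{1+x}-\frac{1}{2+x}\right) dx =\log(1+\beta)-\log(2+\beta)+\log 2=\log\frac{2(1+\beta)}{2+\beta},
\end{equation*}
and in particular, taking $\beta=1$, this integral equals $\log(4/3)$, matching the normalizing constant in Corollary~\ref{cor2}.

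Next I would write, using Theorem~\ref{theorem1} for the numerator and Corollary~\ref{cor2} for the denominator,
\begin{equation*}
\frac{\#(\cC_Q\cap[0,\beta])}{\#\cC_Q}
=\frac{\dfrac{Q^2}{2\zeta(2)}\log\dfrac{2(1+\beta)}{2+\beta}+O_\varepsilon(Q^{7/4+\varepsilon})}{\dfrac{Q^2}{2\zeta(2)}\log(4/3)+O_\varepsilon(Q^{7/4+\varepsilon})}.
\end{equation*}
Dividing numerator and denominator by $Q^2/(2\zeta(2))$, the error contributions become $O_\varepsilon(Q^{-1/4+\varepsilon})$, which tend to $0$ as $Q\to\infty$ (choosing $\varepsilon<1/4$). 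Hence the ratio converges to $\log\frac{2(1+\beta)}{2+\beta}\big/\log(4/3)$, and substituting the integral identity from the first step yields exactly the claimed limit. One minor point to note is that the denominator is bounded away from $0$ for all large $Q$, so the quotient is well defined in the limit; this is immediate from Corollary~\ref{cor2}.

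There is essentially no obstacle here: the corollary is a formal consequence of the two preceding results, and the only thing to verify with any care is the bookkeeping on the error terms — namely that $Q^{7/4+\varepsilon}$ is genuinely of smaller order than $Q^2$ after division, which holds with room to spare. If one wanted to be thorough, one could also remark that the convergence is in fact quantitative, with rate $O_\varepsilon(Q^{-1/4+\varepsilon})$ uniformly in $\beta\in[0,1]$, since both error terms in Theorem~\ref{theorem1} and Corollary~\ref{cor2} are uniform in $\beta$.
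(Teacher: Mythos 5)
Your proof is correct and is exactly the argument the paper intends: the corollary follows by dividing the asymptotic of Theorem~\ref{theorem1} by that of Corollary~\ref{cor2} and identifying $\log\frac{2(1+\beta)}{2+\beta}=\int_0^\beta\bigl(\frac{1}{1+x}-\frac{1}{2+x}\bigr)\,dx$. Nothing further is needed.
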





It is well known  that the classical Farey sequence $(\cF_Q)_Q$  is uniformly distributed on $[0,1]$ (cf., e.g., \cite[Theorem~1]{Mik1949}). 
On the other hand, the repulsion between rational numbers, illustrated by the inequality 
$\lvert \gamma -\gamma^\prime \rvert \ge 1/Q^2$ if $\gamma,\gamma^\prime \in \cF_Q$, $\gamma \neq \gamma^\prime$,
ensures that the spacing statistics of $(\cF_Q)_Q$ are not Poissonian.
The Farey statistics have been thoroughly analyzed  (cf., e.g., \cite{Hall1970,HT1984,KZ1997,ABCZ2001,BCZ2001,BZ2005,AC2014}), 
even for more geometrically involved analogues, such as in \cite{Ma2013,He2021,Zhang2021,Lu2022}.
Although the $\SL$-saturated Farey fractions are not uniformly distributed, 
one can investigate the corresponding spacing statistics but problems become more challenging. 
As a first step in this direction, we establish in Section~\ref{Section3} the existence of the limiting gap distribution of $(\SF_Q)_Q$, proving
the following result.
\begin{theorem}\label{thm4}
The gap distribution $\cG (\lambda):=\lim_{Q\rightarrow \infty} \cG_Q(\lambda)$,
where
\begin{equation*}
\cG_Q(\lambda) :=\frac{\#\{ \gamma_1 <\gamma_2\ \text{\upshape consecutive in $\SF_Q$}: \gamma_2 -\gamma_1 \le \lambda/N(Q)\}}{N(Q)},\qquad \lambda \ge 0,
\end{equation*}
exists and can be expressed explicitly in terms of iterates of the map $T$ which acts on the triangle 
$\Omega :=\{ (x,y): 0<x,y \le 1, x+y >1\}$ as
\begin{equation}\label{eq2}
T(x,y)=\big( y,\kappa (x,y)y-x\big) , \quad \text{with} \quad \kappa (x,y):=\bigg\lfloor \frac{1+x}{y}\bigg\rfloor.
\end{equation}
\end{theorem}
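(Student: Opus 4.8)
The plan is to follow the strategy used for the classical Farey gap distribution in \cite{BCZ2001}, now adapted to the measure appearing in Theorem~\ref{theorem1}. The first step is to set up the combinatorial description of consecutive elements of $\cC_Q$. Writing an element $d/b \in \cC_Q$ with its successor $d'/b' \in \cC_Q$, one wants a criterion in terms of $(b,b')$ (or equivalently a point in a rescaled copy of $\Omega$) that detects both (i) that $d/b$ and $d'/b'$ are consecutive in the full Farey set $\cF_Q$ and (ii) that both denominators satisfy the $\SL$-saturation constraint. For the classical Farey set, $d/b < d'/b'$ are consecutive in $\cF_Q$ precisely when $b,b'\le Q < b+b'$ and $db'-d'b=-1$, so that the pair $(b/Q,b'/Q)$ ranges over (a lattice approximation of) $\Omega$; the key additional input we need is a clean characterization of which $b$ occur as denominators of elements of $\cC_Q$, extracted from the proof of Theorem~\ref{theorem1}. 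I expect this to say that $d/b\in\cC_Q$ iff $b$ can be completed by some $a$ with $a\ge b$ and $\operatorname{Tr}\le Q$, which after the continued-fraction bookkeeping of the introduction becomes a condition of the shape ``there is a tail $a^\ast$ with $a + (\text{stuff}) \le Q$'', i.e. an inequality $b + b'' \le Q$ involving the \emph{next} Farey denominator $b''$ after $b'$ — in other words, membership in $\cC_Q$ is a condition on a \emph{window of three} consecutive Farey denominators rather than two.

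Granting that, the second step is to pass to the dynamical picture. On $\Omega$ the map $T$ of \eqref{eq2} sends $(x,y)=(b/Q,b'/Q)$ to $(b'/Q,b''/Q)$ where $b''$ is the next Farey denominator; this is exactly the ``$T$-renewal'' used in \cite{BCZ2001}. So a run of consecutive Farey fractions corresponds to an orbit segment of $T$, and selecting those that lie in $\cC_Q$ amounts to selecting the orbit points whose $T$-neighborhood (itself, its image, its preimage) satisfies the saturation inequalities — a condition cut out by a fixed union of polygonal regions $\cR_j \subseteq \Omega$ independent of $Q$. The gap between two consecutive elements of $\cC_Q$ is then $1/(bb')$ summed over the Farey fractions skipped in between, i.e. $\sum 1/(b_i b_{i+1})$ along a $T$-orbit segment running from one $\cC_Q$-point to the next; rescaled by $\#\cC_Q-1 \sim \frac{Q^2}{2\zeta(2)}\log(4/3)$, this is $\frac{2\zeta(2)}{\log(4/3)}\cdot\frac{1}{Q^2}\sum \frac{1}{(b_i/Q)(b_{i+1}/Q)}$, a Birkhoff-type sum of the function $1/(xy)$ along the $T$-orbit. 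Counting how many consecutive Farey fractions are skipped before the next saturated one, and with what total gap, is governed by the return map of $T$ to the set $\cR:=\bigcup_j\cR_j$.

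The third step is the equidistribution / measure-theoretic input. The map $T$ preserves the probability measure $\frac{1}{\log 2}\cdot\frac{dx\,dy}{(1+? )\dots}$ — concretely, in \cite{BCZ2001} the relevant invariant density on $\Omega$ is a constant (area) measure, and the pair $(b/Q, b'/Q)$ of consecutive denominators equidistributes in $\Omega$ w.r.t. normalized Lebesgue measure as $Q\to\infty$; this is the standard Farey-denominator equidistribution (e.g. via Kloosterman-type / spectral bounds, which is also where the $Q^{7/4+\varepsilon}$ error in Theorem~\ref{theorem1} ultimately comes from). Using this, $\cG_Q(\lambda)$ converges to an integral over $\Omega$: roughly,
\begin{equation*}
\cG(\lambda)=\frac{1}{\operatorname{Area}(\cR)}\int_{\cR}\mathbf{1}\!\left[\,\frac{2\zeta(2)}{\log(4/3)}\sum_{k=0}^{n(x,y)-1}\frac{1}{\pi_1(T^k(x,y))\,\pi_2(T^k(x,y))}\le\lambda\right]dx\,dy,
\end{equation*}
where $n(x,y)$ is the first-return time of $(x,y)$ to $\cR$ under $T$ and $\pi_1,\pi_2$ are the coordinate projections; summing over the finitely many polygonal ``branches'' of the return map produces the ``explicit formulas involving the iterates of $T$'' promised in the statement. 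Concretely one would: (a) verify the three-denominator saturation criterion and identify $\cR$ as an explicit polygon; (b) prove $\lim_Q\#\cC_Q\cdot\cG_Q$ equals the area of the sublevel set via Weyl-type equidistribution of $(b_i/Q,b_{i+1}/Q)$, handling the usual technical point that the relevant test function $\mathbf 1[\text{gap}\le\lambda]$ has a boundary of measure zero and that long $T$-excursions (where $n(x,y)$ is large, near the cusp of $\Omega$) contribute negligibly — this tail control is exactly the step that needs care and is, I expect, \textbf{the main obstacle}, since the first-return time to $\cR$ is unbounded and one must show both that the measure of $\{n(x,y)>N\}$ decays fast enough and that such excursions contribute $o(1)$ to $\cG_Q$ uniformly; (c) assemble the branchwise integrals into the closed form for $\cG(\lambda)$ and record its basic properties (continuity, support, behavior as $\lambda\to0^+$ and $\lambda\to\infty$).
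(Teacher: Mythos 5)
Your high-level architecture matches the paper's: decompose the gap event by the number $r$ of Farey fractions skipped between two consecutive elements of $\cC_Q$, express the membership and gap constraints via iterates of $T$ on the Farey triangle, and pass to an area integral after an equidistribution step (which in the paper is carried out directly with Weil--Estermann/Kloosterman bounds and M\"obius summation, as in \cite{BCZ2001,Bo2007}, rather than by invoking $T$-invariance of a measure). The paper's counterpart of your ``first-return decomposition'' is the family of sets $\cH_{Q,r}(\eta)$ and the asymptotics $\#\cH_{Q,r}(\eta)\sim C_r(\eta)Q^2$, assembled via \eqref{eq5}--\eqref{eq6}.

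However, you identify tail control for unbounded return times as \emph{the main obstacle}, and this is a misdiagnosis: that obstacle does not exist here. Since consecutive Farey fractions in $\cF_Q$ satisfy $a_{i+1}/q_{i+1}-a_i/q_i = 1/(q_iq_{i+1})>1/Q^2$, a gap of size at most $\eta/Q^2$ can skip at most $\lfloor\eta\rfloor$ intermediate fractions; so $\cH_{Q,r}(\eta)=\varnothing$ for $r\ge\eta$ (the paper's Remark~\ref{rem1}), and the sum over $r$ in \eqref{eq6} is automatically finite. You actually state the inequality that gives this but do not draw the conclusion. There is no unbounded first-return time to worry about, no measure decay of $\{n>N\}$ to estimate, and no uniform-$o(1)$ contribution of long excursions to control; the problem is finitary from the start for each fixed $\lambda$.

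Your description of the membership criterion is also off in a way that matters. The condition $a/q\in\cC_Q$ is $q+a+\va\le Q$, where $\va\in[1,q)$ is the multiplicative inverse of $a\pmod q$; it is not an inequality involving the \emph{next} Farey denominator $q''$, nor is it cleanly ``a condition on a window of three consecutive Farey denominators.'' Translating $q+a+\va\le Q$ into consecutive-denominator data is precisely what the paper's Lemma~\ref{lem5} does, and the resulting expressions involve both $\lfloor q_2/q_1\rfloor$ and the modular inverse $\vq_2$ (equivalently $\vvq_1$), which is why the counting naturally lands on pairs $(\alpha,\beta)$ with $\alpha\beta\equiv 1\pmod q$ and needs the Kloosterman-type input. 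Moreover, nontrivial structural constraints have to be proved before the region $\cR$ you posit can even be written down: Corollary~\ref{cor6}(i) forces $q_2<q_1$ when $a_2/q_2\in\cC_Q$, Corollary~\ref{cor6}(ii) forces $q_1<q_2$ for the skipped middle fractions, and in the $\cH_{Q,2}$ analysis one must rule out $\nu=\lfloor(Q+q_1)/q_2\rfloor=2$ by a separate contradiction argument. None of this is anticipated in your sketch, and without it the ``fixed polygonal region $\cR$'' picture does not yet have a justified definition. So: right overall shape, wrong identification of where the work is, and the pivotal Lemma~\ref{lem5}/Corollary~\ref{cor6} reductions are missing.
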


The shape of the graphs of  $\cG$ and its derivative (gap density) are illustrated in Figure~\ref{FigureGG}.

\begin{figure}[ht]
 \centering    
 \includegraphics[angle=0,width=0.45\textwidth]{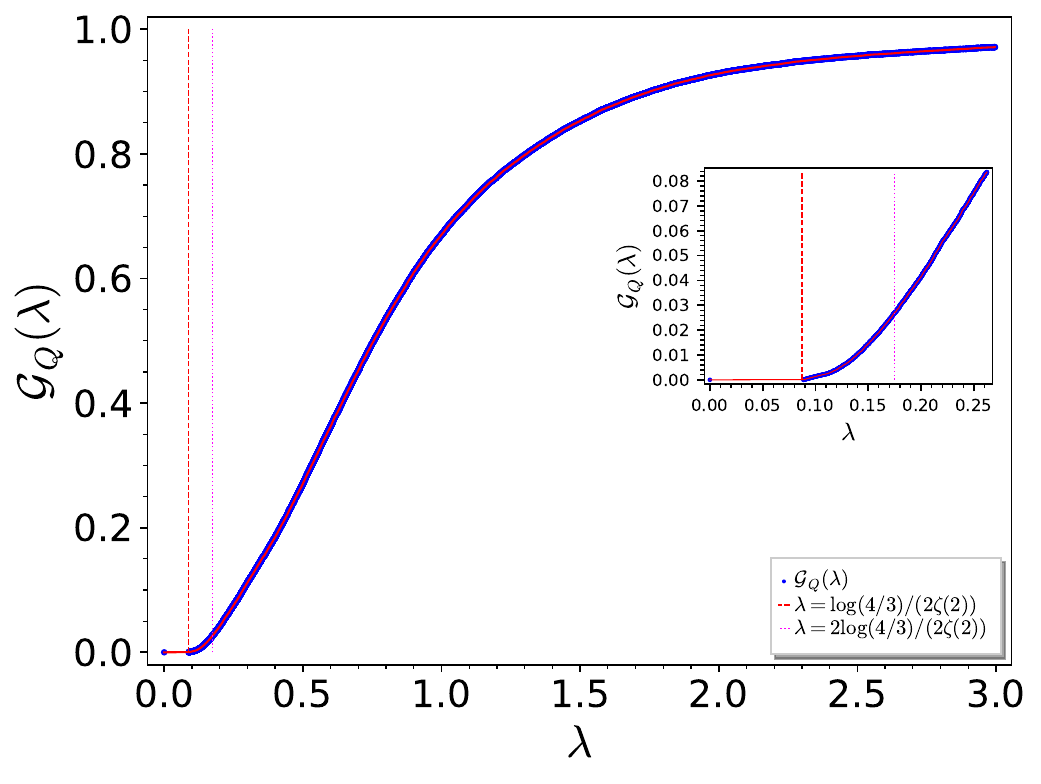}
\quad
 \includegraphics[angle=0,width=0.45\textwidth]{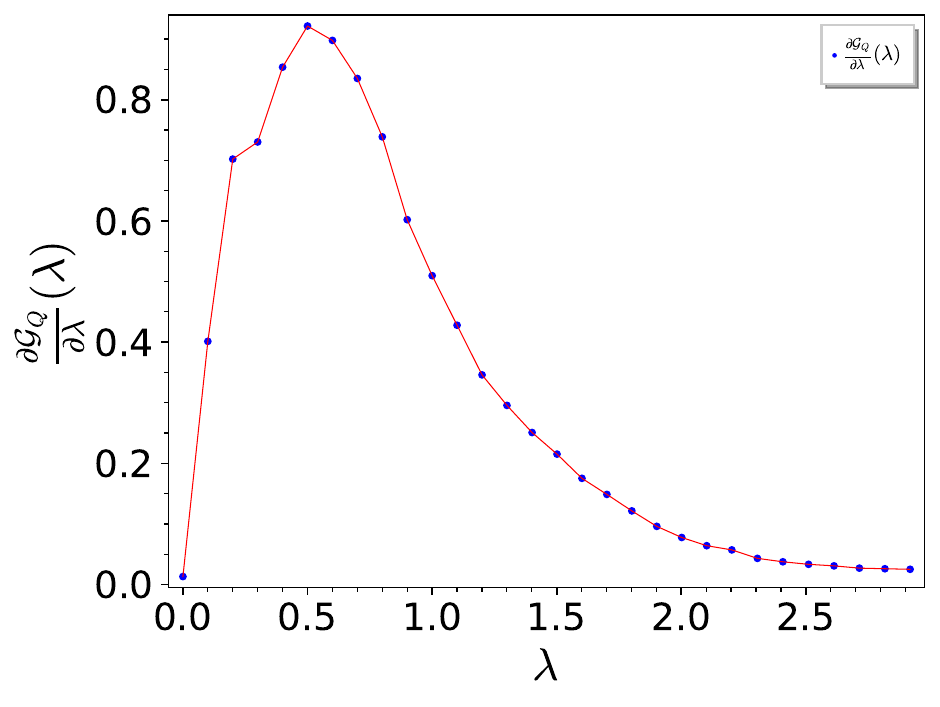}
 \vspace*{-10pt}
\caption{The graphs of $\cG_{500} (\lambda)$ and $\frac{\cG_{500} (\lambda+0.1)-\cG_{500} (\lambda)}{0.1}$}
 \label{FigureGG}
 \end{figure}

Furthermore, if we set $A:=\log(4/3)/(2\zeta(2))$, then $\cG(\lambda)=0$ for every $\lambda \in [0,A]$.
A similar property holds for the gap distribution density of $(\cF_Q)_Q$, which vanishes on the interval $[0,1/(2\zeta(2))]$.
When $\lambda \in (A,2A]$, a closed-form expression of $\cG (\lambda)$ is obtained at the end of Section~\ref{Section3}, via~\eqref{eq9}.

\section{\texorpdfstring{The asymptotic distribution of elements in the sets $\cS_Q$ and $\SF_Q$}
{The asymptotic distribution of elements in the sets SQ and CQ} }\label{Section2}
To illustrate our general strategy, we start by reproving the estimate~\eqref{eq1} with the weaker error term $O_\varepsilon (Q^{7/4+\varepsilon})$.
Given a matrix $\left( \begin{smallmatrix} a & b \\ c & d \end{smallmatrix} \right) \in \cS_Q$ and letting $\va$ denote 
the multiplicative inverse of $a\pmod{b}$ in the interval $[1,b)$, we have $d=\va$, $c=(a\va -1)/b <a$. We can write
\begin{equation*}
\#  \left\{ \left( \begin{matrix} a & b \\ c & d \end{matrix}\right) \in \cS_Q : \frac{d}{b} \le \beta \right\} =
\sum\limits_{b\le Q} \sum\limits_{\substack{b\le a \le Q \\  d\le \min\{ \beta b,Q-a\} \\ ad \equiv 1 \pmod{b}}} 1
= S_Q^I(\beta)+S_Q^{II}(\beta),
\end{equation*}
where $S_Q^I(\beta)$ denotes the contribution of $b\le Q/(1+\beta)$ and $S_Q^{II}(\beta)$ denotes the contribution of $Q/(1+\beta) < b\le Q$.
Consider the set of points $(x,y)\in \RR^2$ satisfying the inequalities $b\le x\le Q$, $0\le y\le \min\{ \beta b,Q-x\}$.
When $b\le Q/(1+\beta)$ this region is a trapezoid of area 
$A^I_{Q,b}(\beta)=\beta b(2Q-(2+\beta)b)/2$, and when $Q/(1+\beta) < b \le Q$ this region is a right-angled triangle of area
$A^{II}_{Q,b} (\beta)=(Q-b)^2/2$.
Exploiting the Weil-Estermann bound and applying  \cite[Lemma~3.1]{Bo2007} with $T=Q^{1/4}$ to the two regions considered above, we find
\begin{equation}\label{eq3}
 \sum\limits_{\substack{b\le a \le Q \\  d\le \min\{ \beta b,Q-a\} \\ ad \equiv 1 \pmod{b}}} 1 =
\frac{\varphi(b)}{b^2}\, \big( A^I_{Q,b}(\beta)+A^{II}_{Q,b} (\beta)\big) +\cE_b,
\end{equation}
with
\begin{equation*}
\cE_b \ll_\varepsilon Q^{3/4} +Q^{1/4} b^{1/2+\varepsilon} +Qb^{-1/2} \quad \text{and} \quad
\sum\limits_{b\le Q} \cE_b \ll_\varepsilon Q^{7/4+\varepsilon}.
\end{equation*}
Summing over $b$ and plugging in~\eqref{eq3} the formulas for $A^I_{Q,b}(\beta)$ and $A^{II}_{Q,b}(\beta)$, we infer:
\begin{equation*}
\begin{split}
\# \bigg\{\begin{pmatrix} a & b \\ c & d \end{pmatrix}  \in \cS_Q :   \frac{d}{b} \le \beta \bigg\}  & = \frac{\beta}{2}
\sum\limits_{b\le \frac{Q}{1+\beta}} \frac{\varphi(b)}{b}\, \big( 2Q-(2+\beta)b)  +
\sum\limits_{\frac{Q}{1+\beta} < b\le Q} \frac{\varphi(b)}{b^2}\, \frac{(Q-b)^2}{2} +O_\varepsilon (Q^{7/4+\varepsilon}) \\ 
   & =-\frac{\beta(2+\beta)}{2} \sum\limits_{b\leq \frac{Q}{1+\beta}}
   \varphi (b) +\frac{1}{2} 
   \sum\limits_{\frac{Q}{1+\beta}\leq b\leq Q} \varphi (b) + 
   \beta Q \sum\limits_{b\leq \frac{Q}{1+\beta}} 
   \frac{\varphi(b)}{b} \\ 
   & \quad 
   -Q \sum\limits_{\frac{Q}{1+\beta}\leq b\leq Q} 
   \frac{\varphi(b)}{b} +\frac{Q^2}{2} \sum\limits_{\frac{Q}{1+\beta}\leq b\leq Q} \frac{\varphi(b)}{b^2} +O_\varepsilon (Q^{7/4+\varepsilon}).
\end{split}
\end{equation*}

In conjunction with the elementary estimates (see, e.g., \cite[Exercise~5, Chapter~3]{Apo1976})
\begin{equation}\label{eq4}
 \sum\limits_{b\le Q} \varphi (b)=\frac{Q^2}{2\zeta(2)} +O(Q\log Q),\qquad
\sum\limits_{b\le Q} \frac{\varphi(b)}{b} =\frac{Q}{2\zeta(2)}+O(\log Q) ,
\end{equation}
and the less trivial (see, e.g., \cite[Exercise~6, Chapter~3]{Apo1976} and  \cite[Corollary~4.5]{Bo2007} for a proof)
\begin{equation}\label{eq5}
\sum\limits_{b\le Q} \frac{\varphi(b)}{b^2} =\frac{\log Q}{\zeta(2)} +C+O(Q^{-1}\log Q) \quad \text{for some constant $C$,}
\end{equation}
this yields, up to an error of magnitude $O_\varepsilon (Q^{7/4+\varepsilon})$,
\begin{equation*} 
\begin{split}
\# \left\{ \begin{pmatrix} a & b \\ c & d \end{pmatrix} \in \cS_Q : \frac{d}{b} \le \beta \right\} &  = 
\frac{Q^2}{2\zeta(2)} \bigg( -\frac{\beta(2+\beta)}{2(1+\beta)^2} +\frac{1}{2} \Big( 1-\frac{1}{(1+\beta)^2} \Big)
+\frac{\beta}{1+\beta}-1+\frac{1}{1+\beta} \bigg) \\
& 
\quad
+\frac{Q^2}{2\zeta(2)}\bigg(  \log \Big( \frac{Q}{Q/(1+\beta)}\Big)+C-C \bigg)  +O_\varepsilon (Q^{7/4+\varepsilon})\\
&  =\frac{Q^2}{2\zeta(2)} \log (1+\beta) +O_\varepsilon (Q^{7/4+\varepsilon}) ,
\end{split}
\end{equation*}
concluding the proof of~\eqref{eq1} with the weaker error term $O_\varepsilon (Q^{7/4+\varepsilon})$.

\begin{proof}[Proof of Theorem~\ref{theorem1}]
Matrices in $\cS_Q$ can be expressed as
\begin{equation}\label{eq6}
M=\left( \begin{matrix} \vd +kb & b \\  \frac{d\vd -1}{b}+kd & d \end{matrix}\right) \in \operatorname{SL}(2,\ZZ),
\end{equation}
where
\begin{equation*}
\left\{
\begin{aligned}
&d\vd \equiv 1 \pmod{b}, \ 0< \vd <b,\\ 
& 0\leq d\leq b \leq  \vd +kb, \\
& b\le \frac{d\vd -1}{b}+kd   \le \vd+kb ,\\ 
& d+\vd +kb \le Q,\   k\geq 1.
\end{aligned}
\right.
\end{equation*}

While estimating $\# \SF_Q$, we must take $k=1$ in~\eqref{eq6}. 
This entails
\begin{equation*}
b+d+\vd \le Q,
\end{equation*}
so we can write 
\begin{equation*}
\# (\SF_Q \cap [0,\beta]) =\sum\limits_{b\le Q} \sum\limits_{\substack{d \le \beta b \\ 1\le \vd \le b,\,  d+\vd \le Q-b \\ d\vd \equiv 1 \pmod{b}}} 1
=S_Q^I(\beta)+S_Q^{II}(\beta)+S_Q^{III} (\beta)+S_Q^{IV}(\beta) ,
\end{equation*}
where notation is as follows: $S_Q^I(\beta)$ denotes the contribution of $b\le Q/(2+\beta)$, $S_Q^{II}(\beta)$ the contribution of \mbox{$Q/(2+\beta) < b \le Q/2$}, 
$S_Q^{III} (\beta)$ the contribution of $Q/2 <b\le Q/(1+\beta)$, and $S_Q^{IV}(\beta)$ the contribution of $Q/(1+\beta) < b\le Q$.

The set of points $(x,y)\in [0,\beta b]\times [0,b]$, defined by the inequality $x+y \le Q-b$ represents:

\begin{itemize}
\setlength\itemsep{5pt}
\item[(I)] the rectangle$[0,\beta b]\times [0,b]$\\ of area $A^I_{A,b}(\beta)=\beta b^2$ when $b\le Q/(2+\beta)$;
\item[(II)]the pentagon $\{  (x,y): 0\le x \le \beta b, 0\le y\le \min\{ b,Q-b-x\}\}$ \\of area 
$A^{II}_{Q,B} (\beta)= \beta b^2 -((2+\beta) b-Q)^2/2$ when $Q/(2+\beta) \le b \le Q/2$;
\item[(III)] the trapezoid $\{ (x,y):  0\le x \le \beta b , 0\le y \le Q-b-x\}$\\ of area $A^{III}_{Q,b}(\beta)=\beta b(2Q-(2+\beta)b)/2$ when 
$Q/2 \le b \le Q/(1+\beta)$;
\item[(IV)] the triangle $\{ (x,y):  0\le x\le Q- b, 0\le y \le Q-b-x\}$\\ of area $A^{IV}_{Q,b} (\beta) =(Q-b)^2/2$ when $Q/(1+\beta) \le b \le Q$.
\end{itemize}

As in the proof of~\eqref{eq3}, we derive\footnote{With help from \cite[Lemma 2]{Ust2013} one can improve the error from $Q^{7/4+\varepsilon}$ 
to $Q^{3/2+\varepsilon}$. For sake of space we leave the details to the dedicated reader. }
%
\begin{equation*}
\begin{split}
 \# (\SF_Q  \cap [0,\beta]) &= \sum\limits_{b\le \frac{Q}{2+\beta}} \frac{\varphi(b)}{b^2} A^{I}_{Q,b} (\beta) +
\sum\limits_{\frac{Q}{2+\beta} \le b\le \frac{Q}{2}} \frac{\varphi(b)}{b^2} A^{II}_{Q,b}(\beta)   \\
& \quad +\sum\limits_{\frac{Q}{2} \le b \le \frac{Q}{1+\beta}} \frac{\varphi(b)}{b^2} A^{III}_{Q,b} (\beta)
+\sum\limits_{\frac{Q}{1+\beta} \le b\le Q} \frac{\varphi(b)}{b^2}
A^{IV}_{Q,b} (\beta) +O_\varepsilon (Q^{7/4+\varepsilon}) \\
& =\beta \sum\limits_{b\le \frac{Q}{2}} \varphi(b)-\frac{1}{2} \sum\limits_{\frac{Q}{2+\beta} \le b \le \frac{Q}{2}} 
\frac{\varphi(b)}{b^2}\, \big( (2+\beta)b-Q\big)^2   \\
& \quad  +\frac{\beta}{2} \sum\limits_{\frac{Q}{2} \le b \le \frac{Q}{1+\beta}} \frac{\varphi(b)}{b}\, \big( 2Q-(2+\beta )b\big) +
\frac{1}{2} \sum\limits_{\frac{Q}{1+\beta} \le b \le Q} \frac{\varphi(b)}{b^2}  \, (Q-b)^2
+O_\varepsilon (Q^{7/4+\varepsilon}) .
\end{split}
\end{equation*}

Taking stock on the estimates \eqref{eq4} and \eqref{eq5}, we find that, up
to an error $O(Q\log Q)$, the contribution of sums involving $\varphi (b)$, $\varphi(b)/b$ and $\varphi(b)/b^2$ is
\begin{align*}
     \frac{Q^2}{2\zeta(2)} 
\bigg( \frac{\beta}{4} -\frac{(2+\beta)^2}{2}
\Big( \frac{1}{4} -\frac{1}{(2+\beta)^2}\Big) -
\frac{\beta(2+\beta)}{2}  \Big( \frac{1}{(1+\beta)^2}-\frac{1}{4}
\Big) +\frac{1}{2} \Big( 1-\frac{1}{(1+\beta)^2}\Big) \bigg) =0, 
\end{align*}
\begin{align*}
 \frac{Q^2}{2\zeta(2)} \bigg( (2+\beta)\Big( \frac{1}{2}-\frac{1}{2+\beta}\Big) 
+\beta \Big( \frac{1}{1+\beta}-\frac{1}{2}\Big) -1+\frac{1}{1+\beta}\bigg) =0,
\end{align*}
and respectively
\begin{align*}
-\frac{Q^2}{2\zeta(2)}\, \log \bigg( 
\frac{Q/2}{Q/(2+\beta)}\bigg) +\frac{Q^2}{2\zeta(2)}\, 
\log \bigg( \frac{Q}{Q/(1+\beta)} \bigg) =
\frac{Q^2}{2\zeta(2)} \, 
\log \bigg( \frac{2(1+\beta)}{2+\beta} \bigg) .
\end{align*}

We infer that
\begin{align*}
\# (\SF_Q  \cap [0,\beta])=
\frac{Q^2}{2\zeta(2)} 
\log \bigg( \frac{2(1+\beta)}{2+\beta}\bigg) +O_\varepsilon (Q^{7/4+\varepsilon}),
\end{align*}
which concludes the proof.
\end{proof}

\section{\texorpdfstring{The gap distribution of $\SF_Q$}{The gap distribution of cCQ}}\label{Section3}

For each $a/q \in \QQ \cap (0,1)$ with $(a,q)=1$, we set
\begin{equation*} 
h(a/q):=q+a+\va ,
\end{equation*}
where $a\va \equiv 1 \pmod{q}$, $1\le a <q$ if $q\ge 2$. We also set $h(0/1):=1$ and $h(1/1):=3$.
As noticed in the beginning of the proof of Theorem~\ref{theorem1}, we have
\begin{equation*}
\SF_Q =\{ a/q \in \cF_Q  : h(a/q) \le  Q  \} \subseteq \cF_Q.
\end{equation*}

We also introduce the sets
\begin{equation*}
\cH_{Q,r}(\eta):=\left\{
\frac{a_1}{q_1} \in \SF_Q :
\begin{aligned}
\exists\     \frac{a_2}{q_2} ,\ldots ,\frac{a_r}{q_r} \in \cF_Q \setminus \SF_Q,\
 \frac{a_{r+1}}{q_{r+1}} \in \SF_Q ,\  \frac{a_{r+1}}{q_{r+1}} - \frac{a_1}{q_1} \le \frac{\eta}{Q^2}
\\
 \frac{a_1}{q_1} <\frac{a_2}{q_2} < \cdots < \frac{a_{r+1}}{q_{r+1}} \text{ consecutive in $\cF_Q$} 
\end{aligned} 
\right\},\quad r\ge 1, \eta >0.
\end{equation*}

\begin{remark}\label{rem1}
Since $a_{i+1}/q_{i+1}-a_i/q_i =1/(q_i q_{i+1}) > 1/Q^2$, it follows that the set $\cH_{Q,r}(\eta)$ is nonempty only if $1\le r <  \eta$.
\end{remark}

We will prove that for all $r\in \NN$ and $\eta >0$, there exists a constant $C_r(\eta) \in [0,\infty)$ such that 
\begin{equation}\label{eq7}
\# \cH_{Q,r}(\eta) \sim C_r(\eta)Q^2 \quad \text{as $Q\rightarrow \infty$}.
\end{equation}
Since $\# \SF_Q \sim A Q^2$ with $A=\log(4/3)/(2\zeta(2))$, this will subsequently imply the existence of the
limiting gap distribution of $(\SF_Q)_Q$. More precisely, we have $\Delta_{\rm av}=1/(N(Q)-1) \sim 1/(AQ^2)$ and
\begin{equation}\label{eq8}
\cG (\lambda):=\lim\limits_Q \frac{\#\{ \gamma_1 <\gamma_2 \text{ consecutive in $\SF_Q$}: 
\gamma_2 -\gamma_1 \le \lambda \Delta_{\rm av}\}}{N(Q) -1} 
=\frac{1}{A} \sum\limits_{1\le r <\lambda/A} C_r (\lambda/A), \text{ for } \lambda >0 .
\end{equation}

According to Remark~\ref{rem1}, we also have $\# \cH_{Q,r}(\eta)=0$ whenever $r\ge 2$ and $\eta < 2$, hence
\begin{equation}\label{eq9}
\cG (\lambda ) = \frac{C_1 (\lambda/A)}{A} \text{ for } \lambda \in (A,2A].
\end{equation}

\subsection{\texorpdfstring{Work on $\cH_{Q,1}(\eta)$}
{Work on HQ(eta) in terms of consecutive denominators}}\label{Sect3.1}
Firstly, we are interested in estimating the cardinality of the set
\begin{equation*}
    \cH_{Q,1}(\eta)
     =     \left\{  \frac{a_1}{q_1} \in \SF_{Q}: \ 
   \exists\  \frac{a_2}{q_2} \in \SF_Q,\  \frac{a_1}{q_1} <\frac{a_2}{q_2} \text{ consecutive in } \cF_{Q}, \
\frac{1}{q_1 q_2} \le \frac{\eta}{Q^2}
    \right\},
\end{equation*}
which is the same as 
\begin{equation*}
    \#   \left\{ (q_1,q_2,a_1,a_2)\in \NN^4 :
    \begin{tabular}{l}
             $a_i\le q_i \le Q, \  q_1+q_2 >Q, \
             q_1 q_2 \ge Q^2 /\eta$ 
            \\[6pt] 
    $a_2 q_1 -a_1 q_2  =1, \ h(a_1/q_1) \le Q, \  h(a_2/q_2) \le Q$
    \end{tabular}     
    \right\}.
\end{equation*}

The following elementary  lemma will be useful.

\begin{lemma}\label{lem5}
Suppose that $a_1/q_1<a_2/q_2$, $0<a_1<q_1$, $0< a_2 < q_2$, and 
\begin{equation*}
    a_2 q_1 -a_1 q_2 =1 .
\end{equation*}
 Denote by $\va_1$, respectively $\vq_2$, the multiplicative inverse of $a_1 \pmod{q_1}$,
 respectively $q_2 \pmod{q_1}$,
in~$[1,q_1)$ and by $\vva_2$, respectively $\vvq_1$, the multiplicative inverse of 
$a_2\pmod{q_2}$, respectively $q_1\pmod{q_2}$, in $[1,q_2)$. Then 
\begin{itemize}
 \item[$(i)$]
 $\displaystyle \va_1 =\bigg( 1+\bigg\lfloor \frac{q_2}{q_1}\bigg\rfloor \bigg) q_1 -q_2 ,
 \quad \vva_2 =q_1 -\bigg\lfloor \frac{q_1}{q_2}\bigg\rfloor q_2 ,$
 \item[$(ii)$] $\displaystyle a_1 =q_1 -\vq_2,\quad 
 a_2 =\vvq_1 ,\quad a_1 =\frac{q_1 \vvq_1 -1}{q_2} ,
 \quad a_2 =q_2 -\frac{q_2 \vq_2 -1}{q_1}.$
 \item[$(iii)$]
$\displaystyle h(a_1/q_1)  =\bigg( 2+\bigg\lfloor \frac{q_2}{q_1}\bigg\rfloor \bigg) 
q_1 -q_2+\frac{q_1 \vvq_1 -1}{q_2} ,$
\item[$(iv)$] 
$\displaystyle h(a_2/q_2)= q_1 +
        \bigg( 2-\bigg\lfloor \frac{q_1}{q_2}\bigg\rfloor \bigg) q_2 
        -\frac{q_2 \vq_2 -1}{q_1} .$
\end{itemize}
\end{lemma}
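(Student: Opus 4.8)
The plan is to derive all four identities from the single Diophantine relation $a_2 q_1 - a_1 q_2 = 1$ together with the size constraints $0 < a_i < q_i$, using nothing more than uniqueness of multiplicative inverses in the prescribed residue intervals. First I would establish $(ii)$, which is the structural heart: since $a_2 q_1 - a_1 q_2 = 1$, reducing mod $q_1$ gives $-a_1 q_2 \equiv 1 \pmod{q_1}$, i.e. $a_1 q_2 \equiv -1 \pmod{q_1}$; multiplying by $\vq_2$ (the inverse of $q_2$ mod $q_1$) yields $a_1 \equiv -\vq_2 \equiv q_1 - \vq_2 \pmod{q_1}$, and since both $a_1$ and $q_1 - \vq_2$ lie in $[1, q_1)$ (using $\vq_2 \in [1,q_1)$), they are equal. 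Symmetrically, reducing $a_2 q_1 - a_1 q_2 = 1$ mod $q_2$ gives $a_2 q_1 \equiv 1 \pmod{q_2}$, so $a_2 \equiv \vvq_1 \pmod{q_2}$, and again both sides are in $[1,q_2)$, hence $a_2 = \vvq_1$. The remaining two formulas in $(ii)$ are then immediate: solving $a_2 q_1 - a_1 q_2 = 1$ for $a_1$ gives $a_1 = (a_2 q_1 - 1)/q_2 = (q_1\vvq_1 - 1)/q_2$, and solving for $a_2$ gives $a_2 = (a_1 q_2 + 1)/q_1 = ((q_1 - \vq_2)q_2 + 1)/q_1 = q_1 - (q_2\vq_2 - 1)/q_1$.

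Next I would prove $(i)$ by the analogous argument applied to the inverses of $a_1$ and $a_2$ rather than of the $q$'s. From $a_2 q_1 - a_1 q_2 = 1$ we read off $a_1 \cdot (-q_2) \equiv 1 \pmod{q_1}$, so the inverse of $a_1$ mod $q_1$ is congruent to $-q_2$; to land it in $[1, q_1)$ we add the appropriate multiple of $q_1$, namely $\va_1 = (1 + \lfloor q_2/q_1\rfloor)q_1 - q_2$, the point being that $-q_2 + (1 + \lfloor q_2/q_1 \rfloor)q_1 = q_1 - (q_2 - \lfloor q_2/q_1\rfloor q_1) = q_1 - (q_2 \bmod q_1)$, which lies in $(0, q_1]$; one checks it is actually in $[1, q_1)$ using $\gcd(a_1, q_1) = 1$ (so the residue is nonzero) — or rather it equals $q_1$ exactly when $q_1 \mid q_2$, which is excluded since $\gcd(q_1,q_2)=1$ and $q_1 > 1$; the edge case $q_1 = 1$ forces $a_1 = 0$, outside the hypothesis, so we are fine. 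Similarly $a_2 \cdot q_1 \equiv 1 \pmod{q_2}$ so the inverse of $a_2$ mod $q_2$ is $\equiv q_1$, and reducing into $[1,q_2)$ gives $\vva_2 = q_1 - \lfloor q_1/q_2 \rfloor q_2$.

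Finally, $(iii)$ and $(iv)$ are pure bookkeeping: for $(iii)$ I add $q_1$, the expression for $a_1 = q_1 - \vq_2$... wait, more cleanly I use $a_1$ from $(ii)$ in whichever form is convenient and $\va_1$ from $(i)$. Writing $q_1 + a_1 + \va_1$ and substituting $\va_1 = (1 + \lfloor q_2/q_1\rfloor)q_1 - q_2$ and $a_1 = (q_1\vvq_1 - 1)/q_2$ gives directly $q_1 + a_1 + \va_1 = (2 + \lfloor q_2/q_1\rfloor)q_1 - q_2 + (q_1\vvq_1 - 1)/q_2$, which is exactly the claimed formula. For $(iv)$, substitute $\vva_2 = q_1 - \lfloor q_1/q_2\rfloor q_2$ and $a_2 = q_1 - (q_2\vq_2 - 1)/q_1$ into $q_2 + a_2 + \vva_2$ to get $q_2 + q_1 - (q_2\vq_2-1)/q_1 + q_1 - \lfloor q_1/q_2\rfloor q_2 = q_1 + (2 - \lfloor q_1/q_2\rfloor)q_2 - (q_2\vq_2 - 1)/q_1$, as claimed.

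I do not expect any genuine obstacle here — the lemma is a careful unwinding of definitions. The only points requiring attention are the interval placements: verifying that each of $q_1 - \vq_2$, $\vvq_1$, $(1+\lfloor q_2/q_1\rfloor)q_1 - q_2$, and $q_1 - \lfloor q_1/q_2\rfloor q_2$ genuinely lies in the half-open interval $[1, q_i)$ rather than at an endpoint, which is where the coprimality assumptions $\gcd(a_1,q_1) = \gcd(a_2,q_2) = \gcd(q_1,q_2) = 1$ (the last implied by $a_2 q_1 - a_1 q_2 = 1$) get used, and handling the degenerate small cases $q_i = 1$, which are ruled out by $0 < a_i < q_i$. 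With those placements pinned down, uniqueness of the representative in each residue class makes every identity forced.
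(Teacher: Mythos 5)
The paper states Lemma~\ref{lem5} without a proof, so there is nothing internal to compare against; the result is, as you say, a direct unwinding of the Bezout relation $a_2q_1-a_1q_2=1$ together with uniqueness of the representative of each residue class in a half-open interval, and your handling of the interval placements and the degenerate case $q_1=1$ is correct.

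There is, however, an arithmetic slip at the end of part~$(ii)$: solving for $a_2$ with $a_1=q_1-\vq_2$ gives
\[
a_2=\frac{(q_1-\vq_2)q_2+1}{q_1}=\frac{q_1q_2-q_2\vq_2+1}{q_1}=q_2-\frac{q_2\vq_2-1}{q_1},
\]
not $q_1-(q_2\vq_2-1)/q_1$ as you wrote. A quick numerical check with $a_1/q_1=1/3<1/2=a_2/q_2$ (so $\vq_2=2$ and $(q_2\vq_2-1)/q_1=1$) confirms this: $q_2-1=1=a_2$, while $q_1-1=2\ne a_2$. The printed statement of Lemma~\ref{lem5}$(ii)$ contains the same misprint ($q_1$ where $q_2$ is meant), and part~$(iv)$ is only consistent with the $q_2$ version, as one sees by substituting $(i)$ and the corrected $(ii)$ into $q_2+a_2+\vva_2$. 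You should have flagged the discrepancy rather than writing ``as claimed''; your own chain of equalities, carried through carefully, yields the correct formula and exposes the typo.
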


\begin{proof}
(i)  Note that $q_1/q_2, q_2/q_1 \notin \ZZ$. The equality $a_2 q_1-a_1 q_2=1$ yields $\va_1 =-q_2+kq_1$, $\vva_2 =q_1 -\ell q_2$, 
for some $k,\ell \in \ZZ$. Imposing the constraints $0<\va_1<q_1$ and $0<\va_2 <q_2$, we find
$k=1+\lfloor q_2/q_1 \rfloor$ and $\ell=\lfloor q_1 /q_2 \rfloor$.

(ii) follows from $\vq_2=-a_1+rq_1$, $\vvq_1=a_2+sq_2$, with $r,s\in \ZZ$, where $0<\vq_2 <q_1$, $0<\vvq_1 <q_2$
yield $r=1$ and $s=0$.

(iii) and (iv) follow from (i), (ii) and the definition of $h$.
\end{proof}

\begin{corollary}\label{cor6}
Suppose that $a_1/q_1<a_2/q_2$ are consecutive elements in $\cF_Q$. 
\begin{enumerate}[itemsep=4pt,label=\upshape(\emph{\roman*})]
\item[\namedlabel{cor61}{\normalfont{(\textit{i})}}]
If $q_1<q_2$, then $h(a_2/q_2) >Q$ (that is, $a_2/q_2 \in\SF_Q$ implies $q_1 \ge q_2$).
\item[\namedlabel{cor62}{\normalfont{(\textit{ii})}}]
If $q_2 <q_1$ and $h(a_1/q_1) \le Q$, then $h(a_2/q_2) \le Q$.
\end{enumerate}
\end{corollary}

\begin{proof}
  Part~\ref{cor61} follows from $h(a_2/q_2) =q_2+a_2 +\vva_2 = q_2 +a_2 +q_1 > q_2 + q_1  >Q$.

For~\ref{cor62},  Lemma~\ref{lem5} provides:
\begin{align*}
   h(a_1/q_1)=q_1+a_1+\va_1 &=3q_1 -q_2 -\vq_2\\ 
\intertext{and}
   h(a_2/q_2) = q_2+a_2+\vva_2&=q_1+(2-\lfloor q_1/q_2 \rfloor )q_2-  (q_2 \vq_2 -1)/q_1.
\end{align*}

When $k:=\lfloor q_1/q_2\rfloor \geq 2$, the conclusion follows from
$q_2 \vq_2/q_1 \ge  1/q_1 \geq (2-k)q_2 +q_1-Q+1/q_1$.
When $k=1$, note that
$q_1 (1-(Q-q_1)/q_2) \le q_1 (1- (Q-q_1)/q_1) =2q_1 -Q < 3q_1 -Q-q_2$.
Since $Q-q_1 <q_2<q_1$ and $q_2+\vq_2 \ge 3q_1-Q$, we get
\begin{equation*}
    \frac{q_2 \vq_2}{q_1} \ge \frac{q_2}{q_1} (3q_1 -Q-q_2) \geq
\frac{q_2}{q_1} \bigg( 1+q_1 \Big( 1-\frac{Q-q_1}{q_2}\Big) \bigg)
    \geq  q_1+q_2-Q +\frac{1}{q_1},
\end{equation*}
as desired.
    \end{proof}

Returning to $\cH_{Q,1}(\eta)$, we consider pairs $a_1/q_1 <a_2/q_2$ of fractions from $\SF_Q$ which are also consecutive in $\cF_Q$. 
Corollary~\ref{cor6}.\ref{cor61} gives \fbox{$q_2 <q_1$}, so  $q:=q_1\in [Q/2,Q]$. Next,  Lemma~\ref{lem5} 
yields $q_1 +a_1 +\va_1=3q_1  - q_2-\vq_2$, and so
\begin{equation*}
\# \cH_{Q,1} (\eta)  =\sum\limits_{\frac{Q}{2} \le  q\leq Q} \# \left\{ (\alpha,\beta): 
\begin{tabular}{l} 
$\alpha \beta \equiv 1  \pmod{q},\  \max\{ Q-q, Q^2/(\eta q)\} \le \alpha \le q$ \\ 
$\max\{ 3q-Q-\alpha ,0\} \le  \beta \le q$ \end{tabular} \right\} .
\end{equation*}

\begin{remark}\label{rem2}
The inequality $q_1 q_2 \ge Q^2/\eta$ yields $\min\{ q_1,q_2\} \ge Q/\eta$.
\end{remark}

Proceeding as in the proof of~\eqref{eq3}, then applying M\" obius summation (cf., e.g. \cite[Lemma~2.3]{BCZ2000}) and applying
the changes of variable $( x,y)=(Qu,Qv)$ and $q=Qw$, we find
\begin{equation}\label{eq10}
\begin{aligned}
\# \cH_{Q,1} (\eta)  & = \sum\limits_{\frac{Q}{2} \le  q\le Q} \frac{\varphi(q)}{q^2} \operatorname{Area} \left\{  (x,y) :
\begin{tabular}{l}
$\max\{ Q-q,Q^2 /(\eta q)\} \le x \le q$  \\   $\max\{3q-Q-x,0\} \le y \le  q$ 
\end{tabular}
\right\}  +O_\varepsilon (Q^{7/4+\varepsilon})  \\ 
& = \frac{Q^2}{\zeta(2)}  \int_{\frac{1}{2}}^1 \frac{\dd w}{w} \operatorname{Area} \Omega_{1} (w,\eta) +O_\varepsilon (Q^{7/4+\varepsilon}), \quad \text{where} \\
\Omega_{1}  (w,\eta) &  =    \bigg\{ (u,v): 
\max\Big\{ 1-w,\frac{1}{\eta w}\Big\} \le u\le w, \  \max\{ 3w-1-u, 0 \} \le v\le w\bigg\}.  
\end{aligned}
\end{equation} 

The main term in~\eqref{eq10} can be readily computed for every $\eta >0$. For instance, it is equal to $0$ when $0<\eta \le 1$ as $w\le 1/(\eta w)$ for every $w\in (0,1]$.

Further, 
when $1<\eta \le 2$, we have $2/3 < 1/\sqrt{\eta} \le w_0(\eta):=(1+\sqrt{1+8/\eta})/4 < 1$.
When \mbox{$1/2 \le w \le 1/\sqrt{\eta}$}, we have 
$\Area \Omega_1 (w,\eta)=0$.
When $1/\sqrt{\eta} \le w \le w_0(\eta)$, we have 
\mbox{$2w-1 \le 1/(\eta w)\le w$} and 
$\Area \Omega_1 (w,\eta)= (w-1/(\eta w))(2-3w+1/(\eta w))/2$.
When \mbox{$w_0(\eta) \le w \le 1$}, we have $1/(\eta w) \le 2w-1 \le w$ and
$\Area \Omega_1 (w,\eta)=(1-w)^2/2$. 
This yields
\begin{equation*}
C_1 (\eta) =\frac{1}{2\zeta(2)} \int_{\frac{1}{\sqrt{\eta}}}^{w_0(\eta)} \bigg( w-\frac{1}{\eta w}\bigg)\bigg( 2-3w+\frac{1}{\eta w}\bigg)\frac{dw}{w}
+ \frac{1}{2\zeta(2)} \int_{w_0(\eta)}^1 \frac{(1-w)^2}{2w}\, dw , \text{ for } \eta \in [1,2].
\end{equation*}
In conjunction with~\eqref{eq9} this provides an explicit formula for $\cG(\lambda)$ when
$\lambda\in [0,2A]$.

On the other hand, when $\eta \ge 9/2$ and $2/3 \le w \le 1$ we have $1/\eta \le 2/9 \le w(2w-1)$, hence $1/(\eta w) \le 2w-1$ and
$\Area \Omega_1 (w,\eta) =(1-w)^2 /2$. When $1/2\le w\le 2/3$, we have $1/\eta \le  1/4\le w(1-w)$,
hence $1/(\eta w) \le 1-w$ and  $\Area \Omega_1 (w,\eta) =(3-4w)(2w-1)/2$. We infer
\begin{equation*}
\begin{aligned}
C_1 (\eta) & =\frac{1}{\zeta(2)} \int_{\frac{2}{3}}^1 \frac{(1-w)^2}{2w}\, dw + \frac{1}{\zeta(2)} 
\int_{\frac{1}{2}}^{\frac{2}{3}} \frac{(3-4w)(2w-1)}{2w}\, dw \\
& =\frac{2\log 3  -(7/2)\log 2 +1/4}{\zeta(2)} ,
\end{aligned}
\end{equation*}
so in particular $C_1(\eta)$ does not depend on $\eta$ when $\eta \ge 9/2$.


\subsection{\texorpdfstring{Work on $\cH_{Q,2}(\eta)$}
{The expression of HQ2(eta) in terms of consecutive denominators}}
Next, we analyze the contribution of triples $a_1/q_1 < a_2/q_2 < a_3/q_3$
consisting of consecutive elements in $\cF_Q$ satisfying
conditions $a_1/q_1\in \SF_Q$,
$a_2/q_2 \notin \SF_Q$, \mbox{$a_3/q_3 \in \SF_Q$} and 
$a_3/q_3 - a_1/q_1= 1/(q_1 q_2)+1/(q_2 q_3) \le \eta/Q^2$.

The sets
$\cH_{Q,2} (\eta)$ and
\begin{equation*}
   \widetilde{ \cH}_{Q,2}(\eta) := \left\{ (q_1,q_2,q_3,a_1,a_2,a_3)\in \ZZ^6 :
    \begin{tabular}{l}
    $a_2 q_1 - a_1 q_2 = a_3 q_2 - a_2 q_3 = 1,\  1\le a_i< q_i \le Q$ \\
    $q_1 + q_2 > Q, \  q_2+q_3 >Q,\  1/(q_1 q_2) + 1/(q_2 q_3) \le \eta/Q^2$  \\
    $h(a_1/q_1) \le Q,\  h(a_2/q_2) >Q,\  h(a_3/q_3) \le Q$
    \end{tabular}
    \right\}
    \end{equation*}
are in one-to-one correspondence.
Since $a_1/q_1 < a_2/q_2$ are consecutive in $\cF_Q$ and $h(a_1/q_1) \le Q$, 
if $q_2 <q_1$ then Corollary~\ref{cor6}.\ref{cor62} entails $h(a_2/q_2) \le Q$, which is not allowed.
Hence in the definition of $\widetilde{\cH}_{Q,2}(\eta)$ above, we can always assume that 
\fbox{$q_1<q_2$}, and also remove the inequality $h(a_2/q_2) > Q$.
Note also that Remark~\ref{rem2} entails \fbox{$q_2 < \eta q_1$}.

For every integer $b$, denote by $\bar{b}$, $\bar{\bar{b}}$, respectively $\bar{\bar{\bar{b}}}$ the 
multiplicative inverse of $b\pmod{q_1}$ in $[1,q_1)$, $b\pmod{q_2}$ in $[1,q_2)$,
and respectively of $b\pmod{q_3}$ in $[1,q_3)$.
From Lemma~\ref{lem5} we infer $a_2 =\vvq_1$, 
$a_1 =(q_1\vvq_1 -1)/q_2$ and $\va_1 =-q_2+(1+\lfloor q_2/q_1 \rfloor ) q_1$, so that
\begin{equation}\label{eq11}
    h(a_1/q_1) \le Q \ \Longleftrightarrow \  \bigg( 2+\bigg\lfloor
    \frac{q_2}{q_1} \bigg\rfloor \bigg) q_1 -q_2 +\frac{q_1 \vvq_1 -1}{q_2} \le Q.
\end{equation}

From $a_3 q_2 -a_2 q_3 =1$ we infer $0< \vvva_3 =q_2 - \lfloor q_2/q_3 \rfloor  q_3 <q_3$.

On the other hand, letting $\nu:=\lfloor (Q+q_1)/q_2 \rfloor$, we have
$q_3=\nu q_2-q_1$, $a_3 =\nu a_2-a_1 $\xspace$=\nu \vvq_1 - (q_1 \vvq_1 -1)/q_2$, and
$1\le \nu < (Q+q_2)/q_2 <3$.

However, it is easily seen that $\nu=2$ cannot occur. Indeed, $\nu=2$ would imply 
\mbox{$q_3=2q_2-q_1 >q_2$}, $\vvva_3=q_2$, 
$a_3 =2a_2-a_1 =2\vvq_1 -(q_1 \vvq_1 -1)/q_2$, so that
\begin{equation}\label{eq12}
h(a_3/q_3)=
    q_3 +a_3 +\vvva_3 =
    3q_2 -q_1 +2\vvq_1 -\frac{q_1 \vvq_1 -1}{q_2} \le Q.
\end{equation}
On the other hand, in this situation we also have $\lfloor q_2/q_1 \rfloor =1$, so~\eqref{eq11} becomes
\begin{equation}\label{eq13}
 3 q_1 -q_2 +\frac{q_1 \vvq_1 -1}{q_2} \le Q.
\end{equation}
Adding~\eqref{eq12} and~\eqref{eq13} we get $2q_1 +2q_2 +2\vvq_1 \le 2Q$, which contradicts $q_1 +q_2 >Q$.

We are only left with $\nu=1$, which entails $q_3=q_2-q_1 <q_2$ and $a_3 =a_2 -a_1=\vvq_1 - (q_1 \vvq_1 -1)/q_2$,
so that $2Q/3 \le q_2 \le Q$, 
$1/(q_1q_2)+1/(q_2q_3)=1/(q_1q_3)=1/(q_1(q_2-q_1))$, and
\begin{equation*}
    q_3 +a_3 +\vvva_3 \le Q \ \Longleftrightarrow \
    \bigg(  2-\bigg\lfloor \frac{q_2}{q_2 -q_1} \bigg\rfloor \bigg) q_2 
    +\bigg(\bigg\lfloor \frac{q_2}{q_2 -q_1} \bigg\rfloor -1\bigg) q_1 +\vvq_1 
    -\frac{q_1 \vvq_1 -1}{q_2} \le Q .
\end{equation*}
Since $q_1+q_2 >Q$, $q_1 <q_2$ and $\lfloor (Q+q_1)/q_2 \rfloor =1$, we have $q_2 >Q/2$ and
\begin{equation*}
(q_1,q_2) \in Q\cT_1 :=\{ (\alpha,q) : q+\alpha >Q, 0< \alpha <2q-Q\le Q \} .
\end{equation*}
We partition the triangle $Q\cT_1$  into the triangles:
\begin{equation*}
\begin{split}
        \cT_Q^{(1)} & := \{ (\alpha,q)\in Q\cT_+ : Q-q <\alpha <q/2 \} \quad \text{ and } \\
        \cT_Q^{(2)}  & :=  \{ (\alpha,q) \in Q \cT_+ ,\  q/2 <\alpha <2q-Q \},
\end{split}
\end{equation*}
and analyze these two situations:

\texttt{Case 1.} $Q-q <\alpha < q/2$. Then $\nu =1$,
$\lfloor q/\alpha \rfloor \ge 2$ and $\lfloor q/(q-\alpha) \rfloor =
\lfloor 1/(1-\alpha/q) \rfloor =1$. 

Let $k:=1+\lfloor  q/\alpha \rfloor$, so $q/k < \alpha \le  q/(k-1)$. Note that \fbox{$3\le  k<\eta$}.
The triangle $\cT_Q^{(1)}$ is in turn partitioned  into the quadrangles 
\begin{equation*}
\cQ_Q^{(k)}:=    \left\{ (\alpha,q)\in \cT_Q^{(1)} : (k-1)\alpha \le  q < k\alpha \right\} 
    \subseteq \bigg[ \frac{Q}{k+1},\frac{Q}{k-1}\bigg] \times 
    \bigg[ \frac{(k-1)Q}{k},Q\bigg],\quad k\ge 3,
\end{equation*}
with contribution to $\widetilde{\cH}_{Q,2}(\eta)$ given by
\begin{equation*}
S_Q^{(1)} (\eta)= \sum\limits_{3\leq k<1+\eta} \sum\limits_{\frac{(k-1)Q}{k}< q \le Q} A_k (q),
\end{equation*}
where
\begin{equation*}
    A_k (q) =  \#
    \left\{ (\alpha,\beta) :  \begin{aligned}
    & \alpha \beta \equiv 1 \pmod{q},\  \max \bigg\{ Q-q,\frac{q}{k} \bigg\} < \alpha 
    \le \frac{q}{k-1},  \  \alpha (q-\alpha) \ge \frac{Q^2 }{\eta} \\  & 0<\beta \le q,\
    (k+1)\alpha -q +\frac{\alpha\beta -1}{q} \le Q,\
    q +\beta -\frac{\alpha \beta -1}{q} \le Q
    \end{aligned}\right\}.
\end{equation*}

An application of  \cite[Lemma 3.1]{Bo2007} with $T=Q^{1/4}$  to the region
\begin{equation*}
    \Omega_Q^{(k)}  (q,\eta) := \left\{
    (\alpha,\beta): \begin{aligned}
& \max \bigg\{  Q-q,\frac{q}{k}  \bigg\} < \alpha \le \frac{q}{k-1} ,\  \alpha(q-\alpha) \ge \frac{Q^2}{\eta} \\
&  0 \le \beta \le \min\bigg\{ q, \frac{q(Q-q)}{q-\alpha}, \frac{q(Q+q-(k+1)\alpha)}{\alpha}\bigg\}
    \end{aligned}
    \right\}
\end{equation*}
leads to 
\begin{equation}\label{eq14}
   A_k (q) = \frac{\varphi (q)}{q^2} \Area  \Omega_Q^{(k)} (q,\eta)
   +\cE_{k,q},
\end{equation}
with
\begin{equation*}
    \cE_{k.q} \ll_{k,\varepsilon} Q^{-1/4} q+Q^{1/4} q^{1/2+\varepsilon}  
\end{equation*}
and
\begin{equation}\label{eq15}
  \sum\limits_{3\le k<1+\eta}  \sum\limits_{\frac{(k-1)Q}{k} < q \le Q} \cE_{k,q} \ll_{\varepsilon,\eta} 
Q^{7/4+\varepsilon}.
\end{equation}
Combining~\eqref{eq14} and~\eqref{eq15} we infer
\begin{equation*}
    S_Q^{(1)} (\eta) = \sum\limits_{3\le k<1+\eta } \sum\limits_{\frac{(k--1)Q}{k} < q \le Q}
    \frac{\varphi(q)}{q^2} \Area \Omega_Q^{(k)} (q,\eta) +
    O_{\varepsilon,\eta} (Q^{7/4+\varepsilon}) .
\end{equation*}

Applying M\" obius summation to the sum in $q$ above as in Section \ref{Sect3.1}, we deduce
\begin{equation}\label{eq16}
\begin{aligned}
S_Q^{(1)}(\eta) &  = \frac{Q^2}{\zeta (2)} \sum\limits_{3\le k<1+\eta} \int_{1-\frac{1}{k}}^1 \frac{dw}{w}
\Area \Omega^{(k)}_{2,1} (w,\eta)
+ O_{\varepsilon,\eta} (Q^{7/4+\varepsilon}), \quad \text{where} \\
   \Omega^{(k)}_{2,1}  (w,\eta) & = \left\{ (u,v) :
    \begin{aligned}
    &    \max \bigg\{ 1-w,\frac{w}{k} \bigg\} \le u \le  \frac{w}{k-1},\  u(w-u)\ge \frac{1}{\eta}  \\
   & 0\le v \le \min\bigg\{ w, \frac{w(1-w)}{w-u}, \frac{w(1+w-(k+1)u)}{u}\bigg\}
    \end{aligned}
    \right\} .
\end{aligned}
\end{equation}

\texttt{Case 2.} $q/2 <\alpha < 2q-Q$. Then $\nu=1$ and
$\lfloor q/\alpha \rfloor =1$. Set  $\ell:= \lfloor q/(q-\alpha) \rfloor$, so
$(\ell-1)q/\ell  \le \alpha < \ell q/(\ell+1)$. 
We have $2\le \ell=\lfloor q_2/q_3 \rfloor <\eta$.
The triangle $\cT_Q^{(2)}$ is partitioned  into the quadrangles 
\begin{equation*}
    \bigg\{ (\alpha,q)\in \cT_Q^{(2)} : \frac{(\ell +1)\alpha}{\ell} 
    <  q \le   \frac{\ell \alpha}{\ell -1} 
    \bigg\} \subseteq \bigg[ \frac{(\ell -1) Q}{2\ell -1},\frac{\ell Q}{\ell +1}\bigg] \times 
    \bigg[ \frac{(\ell +1)Q}{2\ell +1},Q\bigg], \quad \ell \ge 2,
\end{equation*}
with contribution to $\widetilde{\cH}_{Q,2}(\eta)$ given by
\begin{equation*}
    S_Q^{(2)} (\eta) = \sum\limits_{\substack{2\le \ell <\eta \\  \frac{(\ell +1)Q}{2\ell +1}< q \le Q}} \#
    \left\{ (\alpha,\beta) :\
    \begin{aligned}
  &   \alpha \beta \equiv 1 \pmod{q},\  \max \bigg\{ Q-q,\frac{(\ell -1)q}{\ell} \bigg\} \le \alpha 
    \le \frac{\ell q}{\ell +1}    \\
&   \alpha (q-\alpha) \ge \frac{Q^2 }{\eta}, \\
&   0 \le \beta \le \min\bigg\{ q,\frac{q(Q+q-3\alpha)}{\alpha}, \frac{q(Q+(\ell-2)q-(\ell-1)\alpha}{q-\alpha} \bigg\} 
    \end{aligned}\right\}.
\end{equation*}

Proceeding as in \texttt{Case 1}, we obtain
\begin{equation}\label{eq17}
\begin{aligned}
S_Q^{(2)} (\eta) & =\frac{Q^2}{\zeta(2)} \sum\limits_{2\le   \ell < \eta} \int_{\frac{\ell+1}{2\ell+1}}^1 \frac{dw}{w} \Area 
\Omega^{(\ell)}_{2,2} (w,\eta) +O_{\varepsilon,\eta} (Q^{7/4+\varepsilon}), \quad \text{where} \\
\Omega^{(\ell)}_{2,2} (w,\eta) & = \left\{ (u,v) :
    \begin{aligned}
        & \max \bigg\{ 1-w,\frac{(\ell-1)w}{\ell} \bigg\} \le u \le  \frac{\ell w}{\ell +1},\  u(w-u)\ge \frac{1}{\eta}   \\
   & 0\le v \le \min \bigg\{  w, \frac{w(1+w-3u)}{u}, \frac{w(1+(\ell -2)w-(\ell-1)u)}{w-u}  \bigg\}
    \end{aligned}
    \right\}  .
\end{aligned}
\end{equation}


\subsection{\texorpdfstring{Work on $\cH_{Q,r}(\eta)$, $r\geq 3$}
{Work on HQr(eta) whenrgeq3}}
Here, we count the number of $(r+1)$-tuples $\gamma_1 < \gamma_2 < \cdots < \gamma_{r+1}$ of consecutive elements in $\cF_Q$, $\gamma_i =a_i/q_i$, with 
$\gamma_1, \gamma_{r+1} \in \SF_Q$, 
$\gamma_2,\ldots \gamma_r \notin \SF_Q$, and
\begin{equation*}
\gamma_{r+1} - \gamma_1 =\sum\limits_{i=1}^r \frac{1}{q_i q_{i+1}} 
\le \frac{\eta}{Q^2} .
\end{equation*}

The set $\cH_{Q,r} (\eta)$ has the same cardinality as the set of all integer tuples 
$(q_1,\ldots,q_{r+1},a_1,\ldots,a_{r+1})$ which satisfy
\begin{equation*}
\left\{
\begin{aligned}
& a_2 q_1-a_1 q_2 =a_3 q_2-a_2 q_3  = \cdots =a_{r+1} q_r -a_r q_{r+1} =1 ,\quad
1\le a_i < q_i \le Q,\  i=1,\ldots, r+1, \\
&  q_i +q_{i+1}>Q, \quad \max\{ h(a_1/q_1),h(a_{r+1}/q_{r+1})\} \le Q 
< \min\{ h(a_2/q_2),\ldots,h(a_r/q_r)\} ,\\
& \sum\limits_{i=1}^r \frac{1}{q_i q_{i+1}} \le \frac{\eta}{Q^2}.
\end{aligned}
\right.
\end{equation*}
Note that Corollary \ref{cor6} entails $q_2>q_1$.

Since $a_1/q_1 < \cdots < a_{r+1}/q_{r+1}$ are consecutive elements in $\cF_Q$,
the pair $(q_1,q_2)$ uniquely determines $a_1$, $a_2$, and all denominators $q_3,\ldots,q_{r+1}$. The latter can be expressed 
employing  the iterates $T^i=(L_{i},L_{i+1})$ of the map $T$ from~\eqref{eq2}.
More precisely, we have
\begin{equation*}
\Lambda_{i-1}:=\frac{q_i}{Q} =L_{i-1} \bigg( \frac{q_1}{Q},\frac{q_2}{Q}\bigg) ,\quad \frac{q_{i+1}}{Q}=L_i \bigg( \frac{q_1}{Q},\frac{q_2}{Q}\bigg) ,\quad i\ge 1 .
\end{equation*}

To take advantage of the Weil-Estermann bound, it is more convenient to count triples $(q,\alpha,\beta)$ with $\alpha\beta \equiv 1 \pmod{q}$ instead of
pairs $(q_1,q_2)$, where $q=q_2$, $\alpha=q_1$ and $\beta =\vvq_1 =a_2$. 
More precisely, taking stock on Lemma~\ref{lem5}, we see that  $\# \cH_{Q,r}(\eta)$ 
is given by the number of positive  integer triples $(q,\alpha,\beta)$ which satisfy 
\begin{equation}\label{eq18}
\left\{
\begin{aligned}
&\frac{Q}{2} < q \le  Q, \quad \alpha \beta \equiv 1 \pmod{q},\quad  Q-q  \le \alpha \le q,\quad  0 \le \beta \le q, \\
& \bigg( 2+\bigg\lfloor \frac{q}{\alpha} \bigg\rfloor \bigg)\alpha -q +\frac{\alpha \beta -1}{q} \le Q, \\
& \bigg(2+\bigg\lfloor \frac{q_{i+1}}{q_i}\bigg\rfloor \bigg)q_i -q_{i+1} +a_i >Q, \quad   i=2,\ldots ,r,  \\
& \bigg(1-\bigg\lfloor \frac{q_r}{q_{r+1}} \bigg\rfloor \bigg) q_{r+1} +q_r +a_{r+1} \le Q, \\
&\sum\limits_{i=1}^r \frac{1}{q_i q_{i+1}} \le  \frac{\eta}{Q^2} .
\end{aligned}
\right.
\end{equation}

We also need to introduce the \emph{Farey continuants}, that is, the noncommutative polynomials defined in \cite[Sect.~3.2]{BHS2013} (see also \cite{CZ2015}
and \cite{BH2011}), by $K^F_{-1}(\cdot) =0$, $K^F_0 (\cdot)=1$, $K^F_1 (x)=x$, and
\begin{equation*}
K^F_\ell (x_1,\ldots,x_\ell) =x_\ell K^F_{\ell -1} (x_1,\ldots,x_{\ell-1}) -K^F_{\ell-2} (x_1,\ldots,x_{\ell-2}) .
\end{equation*}
Setting $\nu_i :=\kappa (q_{i-1}/Q,q_i/Q)$ we get
\begin{equation*}
\begin{split} 
(q_{i+1},q_i) & = (q_2,q_1) \left( \begin{matrix} \nu_2 & 1 \\ -1 & 0 \end{matrix}\right) \cdots 
\left( \begin{matrix} \nu_i & 1 \\ -1 & 0 \end{matrix} \right)\\
& = (q_2,q_1) \left( \begin{matrix} K^F_{i-1} (\nu_2,\ldots,\nu_i )  &  K^F_{i-2} (\nu_2 ,\ldots, \nu_{i-1}) \\  
- K^F_{i-2} (\nu_3,\ldots,\nu_i) &  - K^F_{i-3} (\nu_3,\ldots,\nu_{i-1})\end{matrix}\right)
\end{split}
\end{equation*}
and
\begin{equation*}
(a_{i+1},a_i)  = (a_2,a_1) \left( \begin{matrix} K^F_{i-1} (\nu_2,\ldots,\nu_i )  &  K^F_{i-2} (\nu_2 ,\ldots, \nu_{i-1}) \\  
- K^F_{i-2} (\nu_3,\ldots,\nu_i) &  - K^F_{i-3} (\nu_3,\ldots,\nu_{i-1})\end{matrix}\right).
\end{equation*}
The previous equality yields\footnote{Note that $\nu_2,\ldots,\nu_i$ depend only on $q$, $\alpha$ and $\beta$ via the iterates of the map $T$.}
\begin{equation*}
\begin{split}
a_i  & =  K^F_{i-2} ( \nu_2 ,\ldots , \nu_{i-1}) a_2  -K^F_{i-3} ( \nu_3,\ldots , \nu_{i-1}) a_1  \\
&  = K^F_{i-2}(\nu_2,\ldots,\nu_{i-1}) \beta  -K^F_{i-3} (\nu_3,\ldots,\nu_{i-1}) \frac{\alpha\beta-1}{q} \\
& = \frac{q_i \beta +K^F_{i-3} (\nu_3,\ldots,\nu_{i-1})}{q} = \frac{Q\Lambda_{i-1}\beta +K^F_{i-3}(\nu_3,\ldots,\nu_{i-1})}{q} .
\end{split}
\end{equation*} 
Then conditions~\eqref{eq18} become
\begin{equation}\label{eq19}
\left\{ \begin{aligned}
& \frac{Q}{2} < q \le  Q, \quad \alpha \beta \equiv 1 \pmod{q},\quad  Q-q  \le \alpha \le q 
,\quad  0\le \beta \le q, \\
&  \bigg( 2+\bigg\lfloor \frac{q}{\alpha} \bigg\rfloor \bigg)\alpha -q +\frac{\alpha \beta -1}{q} \le Q, \\
& \bigg(2+\bigg\lfloor \frac{\Lambda_i}{\Lambda_{i-1}}\bigg\rfloor \bigg) Q\Lambda_{i-1} -Q\Lambda_i  
 + \frac{Q\Lambda_{i-1} \beta+K^F_{i-3} (\nu_3,\ldots,\nu_{i-1})}{q} >Q, \quad   i=2,\ldots,r, \\
& \bigg(1-\bigg\lfloor \frac{\Lambda_{r-1}}{\Lambda_r} \bigg\rfloor \bigg) Q \Lambda_r + Q\Lambda_{r-1} 
+ \frac{Q\Lambda_r \beta +K^F_{r-2} (\nu_3,\ldots,\nu_r)}{q}  \le Q , \\
& \sum\limits_{i=1}^{r} \frac{1}{\Lambda_{i-1} \Lambda_i} \le \eta .
\end{aligned}
\right.
\end{equation}

When $q_i >1$ we have 
\begin{equation*}
\bigg\lfloor \frac{q_{i+1}}{q_i}\bigg\rfloor +\bigg\lfloor \frac{q_{i-1}}{q_i}\bigg\rfloor =
\kappa \bigg( \frac{q_{i-1}}{Q},\frac{q_i}{Q}\bigg)  -1,
\end{equation*}
which entails 
\begin{equation*}
\max\limits_{2\le i\le r}\bigg\{  \bigg\lfloor \frac{q_{i+1}}{q_i}\bigg\rfloor ,\bigg\lfloor \frac{q_{i-1}}{q_i} \bigg\rfloor \bigg\} \le \eta -1 .
\end{equation*}
Inequalities $1/(q_i q_{i+1}) \le \eta/Q^2$, $i=1,\ldots,r$, also yield 
\begin{equation}\label{eq20}
\min \{ q_1,q_2,\ldots,q_r\} \ge \frac{Q}{\eta}\quad \text{and} \quad
\nu_i =\bigg\lfloor \frac{Q+q_{i-1}}{q_{i}}\bigg\rfloor \le 2\eta ,\ i=2,\ldots,r,
\end{equation}
so that 
\begin{equation*}
\frac{1}{\eta} \le \frac{\Lambda_{i-1}}{\Lambda_i}  \le \eta,\quad i=1,\ldots ,r  .
\end{equation*}

We can ignore the term $K^F_{i-2}(\nu_2,\ldots,\nu_{i-1})/q =O_\eta (1/Q)$ and rewrite 
conditions~\eqref{eq19} as
\begin{equation}\label{eq21}
\left\{ \begin{aligned}
& \frac{Q}{2} < q \le Q ,\quad \alpha\beta \equiv 1 \pmod{q}, \\
& Q-q\le \alpha \le q,
\quad \sum\limits_{i=1}^r \frac{1}{L_{i-1}(\alpha/Q,q/Q)L_i (\alpha/Q,q/Q)} \le \eta, \\
& 0\le \beta \le  \min\bigg\{ q,q\bigg( \frac{Q+q}{\alpha} -2 -\bigg\lfloor \frac{q}{\alpha} \bigg\rfloor\bigg) \bigg\},   \\
& \beta \ge q \bigg( \frac{1+L_{i}   (\alpha/Q,q/Q)}{L_{i-1}(\alpha/Q,q/Q)} -2
- \bigg\lfloor \frac{L_i (\alpha/Q, q/Q)}{L_{i-1}(\alpha/Q,q/Q)} \bigg\rfloor \bigg) ,\quad i=2,\ldots,r, \\
& \beta \le  q \bigg( \frac{1-L_{r-1}(\alpha/Q,q/Q)}{L_r (\alpha/Q,q/Q)} -1+\bigg\lfloor \frac{L_{r-1} (\alpha/Q,q/Q)}{L_r (\alpha/Q,q/Q)}\bigg\rfloor \bigg).
\end{aligned}
\right.
\end{equation}

Consider the sets $\cT_k:=\{ (x,y)\in \cT: \kappa (x,y)=k\}$, $k\in \NN$. Due to~\eqref{eq20}, we are only concerned with (nonempty)
$T$-cylinders 
\begin{align*}
 \cT_{(k_1,\ldots,k_r)} :=\cT_{k_1} \cap T^{-1} \cT_{k_2} \cap \cdots \cap T^{-(r-1)} \cT_{k_r}   ,\quad k_i \le 2\eta.
\end{align*}
On such sets, all functions $L_0,L_1,\ldots L_r$ are linear in each variable.
On each (nonempty) set of the form 
\begin{align*}
   S=\cT_{(k_1,\ldots,k_r)} 
   \cap \Big\{ (x,y): \left\lfloor \sdfrac{L_{i} (x,y)}{L_{i-1}(x,y)}\right\rfloor =A_i,\ i=2,\ldots ,r,\
 \left\lfloor \sdfrac{L_{r-1}(x,y)} {L_r (x,y)}\right\rfloor =A\Big\},
\end{align*}
with $1\le k_i \le 2\eta$, $A_i\le \eta$, $A\le \eta$,
the last three inequalities in~\eqref{eq21} can be expressed as  
\begin{align*}
 \beta &\le q\, \frac{-(2+k_1)\alpha/Q+q/Q+1}{\alpha/Q},\\
\beta &\ge q \,\frac{A(S,i)\alpha/Q+B(S,i)q/Q+C(S,i)}{L_{i-1}(\alpha/Q,q/Q)},\quad i=2.\ldots,r,\\
 \intertext{and respectively} 
 \beta &\le\frac{A(S,r+1)\alpha/Q+B(S,r+1)q/Q+C(S,r+1)}{L_{r}(\alpha/Q,q/Q)},
\end{align*}
for appropriate constants $A(S,i)$, $B(S,i)$, $C(S,i)$. So,  we can apply \cite[Lemma 3.1]{Bo2007} with $T=Q^{1/4}$ for two piecewise $C^1$ functions 
defined on the connected components of the set 
\begin{equation*}
\bigg\{ \alpha \in [Q-q,q] : \sum\limits_{i=1}^r \frac{1}{L_{i-1}(\alpha/Q,q/Q) L_i(\alpha/Q,q/Q)} \le \eta \bigg\} ,
\end{equation*}  
with $q$ fixed, and having both total variation and sup norms $\ll q$. Applying again M\" obius summation and
proceeding as in the cases where  $r=1$ and $r=2$, we complete the proof of the following result.

\begin{theorem}\label{theorem7}
Consider the functions $\Phi,\Psi,\mathfrak{r} :\cT \rightarrow \RR$,
\begin{equation*}
\Phi (x,y)=\frac{1+y}{x}-2-\bigg\lfloor \frac{y}{x}\bigg\rfloor,\qquad
\Psi (x,y)= \frac{1-x}{y} -1 +\bigg\lfloor \frac{x}{y}\bigg\rfloor ,\qquad
\mathfrak{r} (x,y):=\frac{1}{xy} .
\end{equation*}
We have
\begin{equation*} 
\# \cH_{Q,r} (\eta)  =\frac{Q^2}{\zeta (2)} \int_{\frac{1}{2}}^1 \frac{dw}{w} \Area \Omega_{3,r} (w,\eta) +
 O_{\varepsilon,\eta}  (Q^{7/4+\varepsilon}),
\end{equation*}
where $r \ge  3$ and
\begin{equation*}
\Omega_{3,r} (w,\eta) =\left\{  (u,v):
\begin{aligned}
& 1-w \le u \le w,\quad    \sum\limits_{i=1}^r \mathfrak{r} \big( T^{i-1} (u,w)\big) \le  \eta \\
& \max\limits_{1\le i\le r-1}  \big\{ 0, w\Phi \big( T^i (u,w) \big)\} \le  v 
\le  \min\big\{ w,w \Phi (u,w),w\Psi \big( T^{r-1} (u,w)\big) \big\}
\end{aligned}
\right\} .
\end{equation*}

\end{theorem}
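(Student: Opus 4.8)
The plan is to carry the reduction sketched above all the way through to a scaled area integral. First I would invoke Lemma~\ref{lem5} with $q:=q_2>\alpha:=q_1$ and $\beta:=\vvq_1=a_2$ to rewrite each of the constraints on the $q_i$ and $a_i$ defining $\cH_{Q,r}(\eta)$ as a constraint on the single triple $(q,\alpha,\beta)$, obtaining \eqref{eq16}. Writing the iterates $T^{i}=(L_i,L_{i+1})$ of the map $T$ from \eqref{eq2} together with the Farey continuants $K^F_\ell$, one has $q_i=Q\,L_{i-1}(\alpha/Q,q/Q)$ and $a_i=\big(q_i\beta+K^F_{i-3}(\nu_3,\ldots,\nu_{i-1})\big)/q$, which turns \eqref{eq16} into \eqref{eq17}. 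By \eqref{eq18} every relevant partial quotient satisfies $\nu_i\le 2\eta$, so that each term $K^F_{i-2}(\nu_2,\ldots,\nu_{i-1})/q=O_\eta(1/Q)$ may be dropped; using also $1/\eta\le\Lambda_{i-1}/\Lambda_i\le\eta$, the system reduces to \eqref{eq19}.

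Next I would partition the parameter region. Put $(x,y)=(\alpha/Q,q/Q)$. By \eqref{eq18} only the $T$-cylinders $\cT_{(k_1,\ldots,k_r)}=\cT_{k_1}\cap T^{-1}\cT_{k_2}\cap\cdots\cap T^{-(r-1)}\cT_{k_r}$ with all $k_i\le 2\eta$ contribute, and on each such cylinder every $L_j$ for $0\le j\le r$ is linear in each variable separately. I would then refine each cylinder by the (finitely many, each being $\le\eta$) values of the floor functions $\lfloor L_i/L_{i-1}\rfloor$ for $i=2,\ldots,r$ and $\lfloor L_{r-1}/L_r\rfloor$; on the resulting pieces $S$ the last three families of inequalities in \eqref{eq19} become linear inequalities in $(\alpha,q)$ for $\beta$, of the shape $\beta\le q\cdot(\text{affine in }x,y)/L_{i-1}(x,y)$ and similarly, so that on $S$ the admissible range of $\beta$ is an interval with endpoints that are piecewise $C^1$ in $\alpha$ (for $q$ fixed) with total variation and sup norm $\ll q$.

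Then I would count. For fixed $q$ and each piece $S$ I would apply \cite[Lemma~3.1]{Bo2007} with $T=Q^{1/4}$ to the two boundary functions, over the connected components of $[Q-q,q]\cap\{\alpha:\sum_{i=1}^r 1/(L_{i-1}(\alpha/Q,q/Q)L_i(\alpha/Q,q/Q))\le\eta\}$, replacing the lattice-point count by $(\varphi(q)/q^2)\,\Area(\cdots)+O(Q^{-1/4}q+Q^{1/4}q^{1/2+\varepsilon})$, exactly as in \eqref{eq12}. Since the number of cylinders, of floor-levels, and of connected components is $O_\eta(1)$, summing these errors over $Q/2<q\le Q$ yields $O_{\varepsilon,\eta}(Q^{7/4+\varepsilon})$, as in \eqref{eq13}. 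M\"obius summation in $q$ (cf.\ \cite[Lemma~2.3]{BCZ2000}) and the scaling $(q,\alpha,\beta)=(Qw,Qu,Qv)$ then convert $\sum_q(\varphi(q)/q^2)\Area(\cdots)$ into $(Q^2/\zeta(2))\int_{1/2}^1(dw/w)\Area(\cdots)$, and the accumulated conditions reassemble into $\Omega_{3,r}(w,\eta)$ once one checks that the scaled lower bounds are $\max\{0,w\Phi(T^i(u,w))\}$ for $1\le i\le r-1$, the scaled upper bounds are $w$, $w\Phi(u,w)$ and $w\Psi(T^{r-1}(u,w))$, and the scaled form of $\sum 1/(q_iq_{i+1})\le\eta/Q^2$ is $\sum_{i=1}^r\mathfrak r(T^{i-1}(u,w))\le\eta$.

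The main obstacle is the bookkeeping. One has to verify that after discarding the $O_\eta(1/Q)$ continuant terms the surviving $\beta$-inequalities really telescope --- via the continuant identity for $a_i$ and the relation $\lfloor q_{i+1}/q_i\rfloor+\lfloor q_{i-1}/q_i\rfloor=\kappa(q_{i-1}/Q,q_i/Q)-1$ --- into the clean expressions $w\Phi(T^i(u,w))$ and $w\Psi(T^{r-1}(u,w))$, uniformly over all cylinder/floor pieces, and that the number of such pieces and of connected components of the $\alpha$-slices is bounded solely in terms of $\eta$, so that the error stays $O_{\varepsilon,\eta}(Q^{7/4+\varepsilon})$ and is not degraded by a factor growing with $Q$. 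Getting the index shift in the continuant identity right and interpreting the floor functions correctly at cylinder boundaries are the places where slips are easiest.
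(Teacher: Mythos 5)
Your proposal follows the paper's own proof essentially step for step: the reduction via Lemma~\ref{lem5} to $(q,\alpha,\beta)$, the rewriting of conditions \eqref{eq16} into \eqref{eq17}--\eqref{eq19} using the iterates $T^i=(L_i,L_{i+1})$ and Farey continuants, dropping the $O_\eta(1/Q)$ continuant terms, the partition into $T$-cylinders $\cT_{(k_1,\ldots,k_r)}$ refined by floor levels, the application of \cite[Lemma~3.1]{Bo2007} with $T=Q^{1/4}$, and finally M\"obius summation plus the scaling $(q,\alpha,\beta)=(Qw,Qu,Qv)$. The strategy, error management, and final reassembly into $\Omega_{3,r}(w,\eta)$ all coincide with the paper's argument, so this is the same proof.
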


Theorem~\ref{thm4} follows assembling~\eqref{eq10}, \eqref{eq16}, \eqref{eq17} and Theorem~\ref{theorem7}, together with~\eqref{eq7} and~\eqref{eq8}.

\appendix
\setcounter{theorem}{0}
\renewcommand{\thetheorem}{\Alph{section}\arabic{theorem}}
\counterwithin{figure}{section}
\numberwithin{equation}{section}

\section{}

Here, we prove that $\SF_Q^*=\{ 0\} \cup \SF_Q$ gives a unimodular partition of $[0,1]$.
We also describe an algorithmic way of constructing these sets. As for the ordinary Farey sequence~$\cF_Q$, the method is based on certain mediant insertions. 
There, insertion occurs when for two consecutive
elements $a_1 /q_1 <a_2/q_2$ in $\cF_Q$ it happens that $q_1+q_2=Q$ (see, e.g., \cite{HW2008}), while here the insertion process
is delayed until for two consecutive elements $a_1/q_1 < a_2/q_2$ in $\SF_{Q_0}$, 
there exists a mediant fraction~$a/q$ between $a_1/q_1$ 
and $a_2/q_2$ that satisfies the condition $h(a/q) > Q_0$.
The tree generated by the fractions thus inserted for $Q\le 10$ is presented in Figure~\ref{FigureTreeQe10}.
\begin{figure}[htb]
\centering
\hfill
\includegraphics[width=0.89\textwidth]{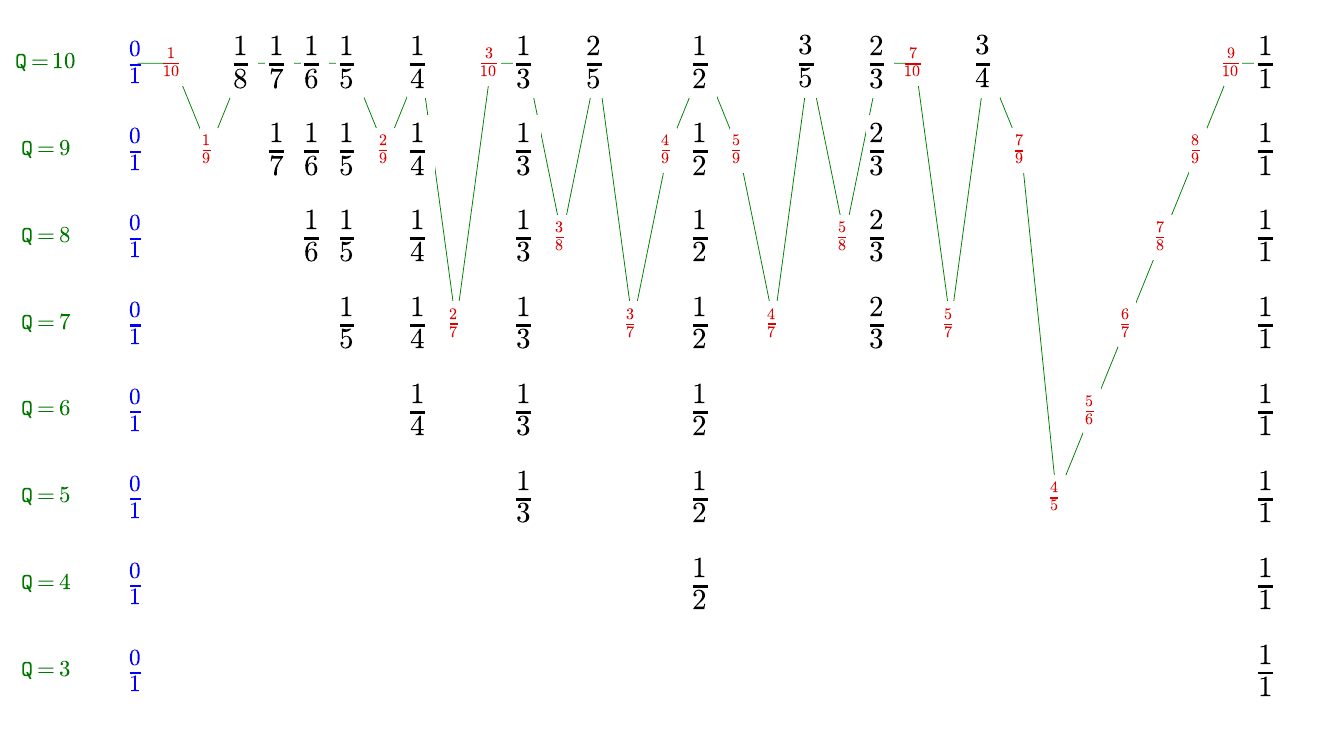}
\hfill\mbox{}
\vspace*{-3mm}
\caption{
The elements of $\SF_Q$, $Q\le 10$.
The Farey fractions are also included in a smaller red font;
they are kept fixed in their insertion position until they are pulled up, 
appearing subsequently at all levels starting from the one where they belong to some~$\SF_Q$.
The support fraction $0/1$ appears at all levels just like the saturated fraction $1/1$  at the opposite end.
}
\label{FigureTreeQe10}
\end{figure}

At the beginning of the proof of Theorem~\ref{theorem1}, we showed that if $0<a<q$ and $(a,q)=1$, then
\begin{equation}\label{eqA1}
\min \{ k: a/q\in \SF_k \} =q+a+\va =: h(a/q).
\end{equation}
Furthermore,
\begin{equation*}
\left( \begin{matrix}
q+\va & q \\ C & a \end{matrix} \right) \in \cS, \quad \text{where } C =\frac{(q+\va)a-1}{q} .
\end{equation*}

\begin{lemma}\label{lemmaA1}
Let $\gamma_1 =a_1 /q_1 < \gamma_2=a_2/q_2$, $0\leq a_i < q_i$, with $a_2 q_1-a_1 q_2 =1$.
Set $Q_i:=h(a_i/q_i)$ and $Q_0 :=\max \{ Q_1,Q_2\}$. Consider the mediant 
$\gamma =\gamma_1 \boxplus \gamma_2 =a/q$, where $a:=a_1+a_2$, $q:=q_1+q_2$, and set 
$Q^\prime :=h(a/q)$. Then
\begin{enumerate}[itemsep=4pt] 
\item[\namedlabel{LemaA1i}{\normalfont{(\textit{i})}}]
$Q_0 < Q^\prime$.

\item[\namedlabel{LemaA1ii}{\normalfont{(\textit{ii})}}]
If we also assume that $\gamma_1 <\gamma_2$ are consecutive elements in $\SF_{Q_0}^*$, then 
$\gamma_1 < \gamma_2$ are also consecutive in $\SF_Q^*$ if $Q_0 \le Q < Q^\prime$, while
$\gamma_1 <\gamma < \gamma_2$ are consecutive in $\SF_{Q^\prime}^*$.
\end{enumerate}
\end{lemma}

\begin{proof}
For each integer $b$, denote by $\bar{b}$, $\bar{\bar{b}}$, respectively $\bar{\bar{\bar{b}}}$ the
multiplicative inverse of $b\pmod{q_1}$ in $[1,q_1)$, $\pmod{q_2}$ in $[1,q_2)$,
respectively $\pmod{q}$ in $[1,q)$.

Firstly, we show that $Q_1 <Q^\prime$. Recall that $aq_1-a_1 q=1$ and
$1\le \vvva ,q_1 <q$, so $\vvva=q_1$ and 
\begin{equation*}
\begin{aligned}
Q^\prime & =q_1 +q_2 +a_1+a_2 +\vvva = q_1 +q_2 +a_1 +a_2 +q_1 \\
& > q_1 +0+a_1+0 +\va_1 =Q_1.
\end{aligned}
\end{equation*}

We now show that $Q_2 < Q^\prime$. Two cases can occur:

\texttt{Case 1.} $q_1 <q_2$. In this instance, we have $\vva_2 =q_1 =\vvva$. Therefore,
\begin{equation*}
\begin{aligned}
Q^\prime & =q_1+q_2+a_1 +a_2 +\vvva = q_1+q_2 +a_1  + a_2 +\vva_2 \\
& > 0+q_2+0+a_2 +\vva_2 =Q_2.
\end{aligned}
\end{equation*}

\texttt{Case 2.} $q_2 \le q_1$. In this instance,
\begin{equation*}
\begin{aligned}
Q^\prime & = q_1+q_2+a_1+a_2 +\vvva =
q_1+q_2+a_1+a_2 + q_1 \\
& > 0 + q_2 +0 +a_2 +q_2 = q_2+a_2 +\vva_2 =Q_2.
\end{aligned}
\end{equation*}
Therefore, $Q^\prime > \max\{Q_1,Q_2\}$.
\end{proof}

\begin{proposition}\label{propA2}
If $\gamma_1 < \gamma_2$ are consecutive elements in the set $\SF_Q^*$ for some $Q\ge 3$, then $a_2 q_1-a_1q _2=1$.
\end{proposition}

\begin{proof}
According to equality~\eqref{eqA1},  we have $Q\ge \max\{ h(\gamma_1),h(\gamma_2)\}$.

We start with $\gamma_1=0/1$, $\gamma_2=1/1$, consecutive elements in $\SF_3^* =\{ 0,1\}$.
Their mediant~\mbox{$\gamma=1/2$} has $Q^\prime =h(\gamma)=4 > \max \{ h(\gamma_1),h(\gamma_2)\}=3$
and $0/1 < 1/2 < 1/1$ are consecutive elements in $\SF_4^*$. At the second step, we get new mediants
$1/3$ and $2/3$ with $h(1/3)=5$, $h(2/3)=7$.
Then $0/1 <1/3 <1/2$ are consecutive in $\SF_5^*$ and in $\SF_6^*$, while $1/2<2/3<1/1$ are consecutive
in $\SF_7^*$.

Consider now $\gamma_1 <\gamma_2$ consecutive elements in $\SF_{Q_0}^*$ such that $a_2q_1-a_1q_2=1$.
According to Lemma~\ref{lemmaA1}, $Q_0 <Q^\prime :=h(\gamma_1 \boxplus \gamma_2)$,
$\gamma_1 <\gamma_2$ are consecutive elements in $\SF_Q^*$ if $Q_0 \le Q <Q^\prime$, and
$\gamma_1 < \gamma_1 \boxplus \gamma_2 < \gamma_2$ are consecutive elements 
in $\SF_{Q^\prime}^*$. Note that
\begin{align*}
    (a_1+a_2)q_1-a_1(q_1+q_2)=1=a_2 (q_1+q_2)-(a_1+a_2)q_2.
\end{align*}

Next, consider $Q^{\prime\prime}:=h\left(\gamma_1 \boxplus(\gamma_1\boxplus\gamma_2)\right)$. Then, for $Q^{\prime} \leq Q < Q^{\prime\prime}$: $\gamma_1 < \gamma_1 \boxplus \gamma_2$ are consecutive elements in $\SF_{Q}^*$; while for $Q=Q^{\prime\prime}$: $\gamma_1 < \gamma_1 \boxplus (\gamma_1 \boxplus \gamma_2) < \gamma_1 \boxplus \gamma_2$ are consecutive elements in $\SF_{Q}^{*}$.

Similarly, consider $Q^{\prime\prime\prime}:=h(\left(\gamma_1\boxplus\gamma_2\right)\boxplus\gamma_2)$.~Then, for $Q^{\prime} \leq Q < Q^{\prime\prime\prime}$: $\gamma_1 \boxplus \gamma_2 < \gamma_2$ are consecutive elements in~$\SF_{Q}^*$; while for $Q=Q^{\prime\prime\prime}$: $\gamma_1 \boxplus \gamma_2 < (\gamma_1 \boxplus \gamma_2) \boxplus \gamma_2 < \gamma_2$ are consecutive elements in~$\SF_{Q}^{*}$.

Starting with $0/1$ and $1/1$ and applying recursively this procedure, we generate all elements
of~$\SF_Q$ for some fixed $Q\ge 3$ and also prove that $\SF_Q^*$ is a 
unimodular partition of $[0,1]$.
\end{proof}




\mbox{}
\end{document}